\newtheorem{thm}{Theorem}[section]
\newtheorem{lem}[thm]{Lemma}
\newtheorem{cor}[thm]{Corollary}
\newtheorem{prop}[thm]{Proposition}
\newtheorem{rem}[thm]{Remark}
\newtheorem{deff}[thm]{Definition}
\numberwithin{equation}{section}
\newcommand{\R}{\mathbb{R}}
\newcommand{\N}{\mathbb{N}}
\newcommand{\mcm}{\mathcal{M}}
\newcommand{\mcd}{\mathcal{D}}
\newcommand{\Om}{\Omega}
\newcommand{\vp}{\varphi}
\newcommand{\rd}{\mathrm{d}}
\newcommand{\divv}{\mathrm{div}}
\newcommand{\Ia}{I(\alpha)}
\newcommand{\li}{\lambda_{i,\alpha}}
\newcommand{\bi}{b_{i,\alpha}}
\newcommand{\mi}{\mu_{i,\alpha}}
\newcommand{\lis}{\lambda_{i,\alpha}^*}
\newcommand{\bis}{b_{i,\alpha}^*}
\newcommand{\bqn}{\begin{equation}}
\newcommand{\eqn}{\end{equation}}
\newcommand{\bqnn}{\begin{equation*}}
\newcommand{\eqnn}{\end{equation*}}
\newcommand{\bear}{\begin{eqnarray}} 
\newcommand{\eear}{\end{eqnarray}} 
\newcommand{\bean}{\begin{eqnarray*}} 
\newcommand{\eean}{\end{eqnarray*}} 
\newcommand{\bs}{\begin{split}}
\newcommand{\es}{\end{split}}
\title[Proteus Mirabilis Swarm-Colony Development with Drift]
{{\it Proteus Mirabilis} Swarm-Colony Development with Drift}
\author[Ph. Lauren\c{c}ot and Ch. Walker]
{Philippe Lauren\c{c}ot and Christoph Walker}
\address{Institut de Math\'{e}matiques de Toulouse, CNRS UMR~5219, Universit\'{e} de Toulouse, F--31062 Toulouse cedex 9, France.}
\email{Philippe.Laurencot@math.univ-toulouse.fr}
\address{Leibniz Universit\"at Hannover, Institut f\"ur Angewandte Mathematik, Welfengarten 1, D--30167 Hannover, Germany.}
\email{walker@ifam.uni-hannover.de}
\begin{document}

\begin{abstract}
We prove a global existence result for  a model describing the swarming phenomenon of the bacterium {\it Proteus mirabilis}. The model consists of an ordinary differential equation coupled with an age-structured equation involving nonlinear degenerate diffusion and an additional drift term.
\end{abstract}

\keywords{Population models, age structure, degenerate diffusion.
\\
{\it Mathematics Subject Classifications (2000)}: 92C17, 35G25, 35M20, 35K65.}

\maketitle

\section{Introduction}

{\it Proteus mirabilis} is a bacterium that is widely distributed
in soil and water in the natural environment. It is also found in
the intestinal tract of many mammals, including human. Broth
cultures of {\it Proteus mirabilis} consist of small {\it swimmer cells}, but produce a morphologically and physiologically distinct cell type, called {\it swarmer cells}, when inoculated on a solid surface. This
process is referred to as ``differentiation'' and is crucial for
the pathogenesis of these bacteria during urinary tract infections
caused, e.g., by long-term urinary catheterization. While swimmer cells go through a prototypical cell division process and are immobile, swarmer cells age and increase in size. Swarmer cells can group together to build multicellular ``rafts'' that, when of sufficient biomass, are capable of translocation. This leads to a migration phase during which swarmer cells may also dedifferentiate again into swimmer cells. Once the biomass falls below the critical threshold, movement ceases initiating a consolidation phase. This oscillation between phases of motion of swarmer cells and consolidation to the swimmer state leads to an interesting bull's-eye-patterned biofilm.

Different mathematical models were proposed in order to
describe the swarming of {\it Proteus mirabilis}. In this article
we focus on a model that was presented and numerically analyzed in
\cite{EsipovShapiro} and later, in slightly modified form, in \cite{Ayati1, MedvedevEtal}. The model involves the swimmer  cell density $v=v(t,x)$ in dependence of time $t$ and spatial position $x$ and the swarmer cell density $u=u(t,a,x)$, where the variable $a$
models cell age. The equations under consideration are
\begin{align}
\partial_t u+\partial_a u&\,=\, \divv_x \big(D(\Lambda)\,\nabla_x u\,
+\, u\, E(\Lambda,v)\, \nabla_x\Lambda \big)\, -\,\mu(a)\, u\ ,  & (t,a,x)\in (0,\infty)^2\times\Om\ ,\label{1}\\
\partial_t v&\,= \,\big(g(v)\,-\,\xi(v)\big)\, v\,+\int_0^\infty
b(a)\,\mu(a)\, u(t,a,x)\, \rd a\ ,  &(t,x)\in (0,\infty)\times\Om\
,\label{2}
\end{align}
where
    \bqn\label{3}
    \Lambda(t,x)\,:=\int_0^\infty \lambda(a)\, u(t,a,x)\, \rd a\ ,\quad (t,x)\in (0,\infty)\times\Om\ ,
    \eqn
subject to the boundary conditions
\begin{align}
u(t,0,x)&\,=\, \xi(v(t,x))\,v(t,x)\ ,\quad &(t,x)\in(0,\infty)\times\Om\ ,\label{4}\\
D(\Lambda)\,\partial_\nu\, u\,+\,u\,E(\Lambda,v)\,\partial_\nu
\Lambda&\,=\,0\ , &(t,a,x)\in (0,\infty)^2\times \partial\Om\
,\label{5}
\end{align}
and the initial conditions
    \bqn\label{6}
 u(0,a,x)=u^0(a,x)\ ,\quad
v(0,x)=v^0(x)\ ,\qquad (a,x)\in (0,\infty)\times\Om\ .\\
    \eqn

\noindent Here, $\Om\subset\R^N$ is an open and bounded set with smooth boundary $\partial\Om$ and $\nu=\left( \nu^1,\ldots,\nu^N \right)$ denotes the outward normal unit vector field to $\partial\Om$. 
The function $\Lambda$ given by \eqref{3} represents the total motile swarmer cell biomass. Since increase in size of swarmer cells is exponential with increase in age, the function $\lambda$ appearing in the definition of $\Lambda$ is often taken in the
form
$$
\lambda(a)\, =\, m_0\, {\bf 1}_{[a_{0},\infty)}(a)\, e^{a/\tau}\ ,
$$
where $a_{0}\ge 0$ is the minimal age of swarmer cells required
to participate actively in group migration. The parameter $\tau$ is the average time it takes a cell to subdivide, and $m_0>0$ is a constant. 

Equation \eqref{1} expresses the change in time of swarmer cells
of a given age $a$. The diffusivity $D$ depends on $\Lambda$ and
is zero for $\Lambda=0$ or, more generally, for $\Lambda$ small.
The explicit appearance of the ``drift'' term $\divv_x\big(u
E(\Lambda,v)\nabla_x \Lambda\big)$ on the right-hand side of
\eqref{1} is a novelty, although it has been already implicitly
contemplated in the model of \cite{Ayati1}. More precisely, in
\cite{Ayati1} the diffusion term is derived from isotropic random motion (instead of Fickian diffusion) and thus written in radially
symmetric coordinates as $(1/r) \partial_r(r\partial_r(D_A(\Lambda)u))$ which corresponds to the choice $D=D_A$ and $E(\Lambda,v)=D_A'(\Lambda)$ in our case. As pointed out in \cite{Ayati1, EsipovShapiro} only swarmer cells of a certain maturity can actively participate in group migration but nothing prevents young swarmers from being caught up in the flow and thus move with larger swarmers in the rafts. However, diffusion terms of the form $\divv_x(D(\Lambda)\nabla_x u)$ as considered in \cite{EsipovShapiro, Frenod, MedvedevEtal} reflect active movement
of swarmers of any age, i.e. also of young swarmers. Therefore, we
hypothesize that this term could be rather small (or even zero) and
that migration of swarmers could be mainly due to the drift term
$\divv_x\big(u E(\Lambda,v)\nabla_x \Lambda\big)$ in which small
(i.e. young) swarmers move but do not actively contribute to a raft's motility. Interesting would be, of course, to see whether numerical computations can support this hypothesis. The age dependent function $\mu$ in \eqref{1} is the dedifferentiation modulus, which is higher for older swarmers than for younger ones.

The change in time of the swimmer population is given by equation
\eqref{2}. The population grows exponentially, where often $g(v)=\tau^{-1}$ in numerical simulations. Swimmer cells differentiate
with rate $\xi(v)$ into swarmers of age $0$ leading to the age
boundary condition \eqref{4}. Usually, $\xi$ is of the form
$\xi(v)=\tilde{\xi}(v)/\tau$, where $\tilde{\xi}(v)=0$
for small and large values of $v$, respectively. The incorporation of a lag phase in swarmer cell production triggers the development of the consolidation phase after a swarm phase. It thus prevents a self-sustaining soliton caused by swarmers that dedifferentiate into swimmers immediately
differentiating into new swarmers. This lag in the onset of
differentiation was used in \cite{Ayati1,Ayati2,MedvedevEtal}. The integral term in \eqref{2} represents dedifferentiation of swarmer cells
into swimmer cells. In the numerical simulations in \cite{Ayati1, Ayati2, EsipovShapiro, MedvedevEtal} the function $b$ is given by $b(a)=e^{a/\tau}$.

As for further explanation of the model and for computational
results regarding \eqref{1}-\eqref{6} we refer to \cite{Ayati1, Ayati2, EsipovShapiro, Frenod2, MedvedevEtal} and the references therein. Existence results for \eqref{1}-\eqref{6} in the case of non-degenerate diffusion and $E\equiv 0$ can be found in \cite{Frenod}, while degenerate diffusion and $E\equiv 0$ was studied in \cite{MMNP}. 

The purpose of this article is to investigate mathematically
equations \eqref{1}-\eqref{6} for degenerate diffusion with
non-vanishing drift term. As already pointed out, the latter, in our opinion, could possibly be more important in the migration process than diffusion. It also generates additional difficulties in the mathematical analysis: indeed, while an $L_\infty$-estimate for $u$ is readily obtained from \eqref{1} when $E=0$, such a bound does not seem to be available in the presence of the drift term ($E\ne 0$) and we only obtain a much weaker $L \ln{L}$-estimate on $u$, see Lemma~\ref{L2.5} below. As a consequence of this lower regularity and the possible degeneracy of $D$ and $E$, the diffusion and drift terms are not well-defined in \eqref{1} and a weak formulation is required. The latter is introduced in Definition~\ref{D} and is somehow reminiscent of the definition of renormalized solutions for the Boltzmann equation \cite{DPL89} or parabolic equations (see, e.g., \cite{AmmarWittbold, BM97, BMR01} and the references therein).

Before stating precisely the assumptions on the data used in this paper and the results obtained, let us briefly outline the difficulties to be overcome and sketch our approach. First, the system \eqref{1}-\eqref{6} is of mixed type and features several nonlinearities. We will thus use a compactness method, that is, first establish the existence of solutions to a sequence of approximate problems and then pass to the limit as the approximation parameter converges to zero. Besides standard approximations (such as the positivity of $D$, the boundedness of $E$, and an additional linear diffusion in \eqref{2}), the approximation used herein relies upon the discretization of \eqref{1} with respect to the age variable which leads us to a system of parabolic equations to which the abstract theory developed by Amann \cite{Amann93} can be applied. Next, \eqref{1} is a first-order transport equation with respect to the age variable and a degenerate parabolic equation with respect to the space variable, while \eqref{2} features no spatial diffusion. The latter thus does not provide any smoothing effect which would guarantee the strong compactness for $v$ needed to pass to the limit in the nonlinear terms $E(\Lambda,v)$, $g(v)$, and $\xi(v)$. Strong compactness for $v$ can thus only result from that of the last term of the right-hand side of \eqref{2} which requires the strong compactness for the age averages of $u$. Such a compactness property can only be deduced from \eqref{1} but is hindered by the possible degeneracy of the diffusion term $D(\Lambda)\nabla_x u$ for small values of $\Lambda$. Nevertheless, it holds true under the assumptions that $\lambda$ is bounded from below by a positive constant while $D$ only vanishes when $\Lambda$ vanishes. It is, however, the main obstacle to include the case $D\equiv 0$ in our analysis, see Remark \ref{Miraculix}. As already mentioned, another difficulty to be faced is that the drift term $\divv_x(u E(\Lambda,v) \nabla_x\Lambda)$ only allows us to obtain an estimate of $u$ in $L \ln{L}$ while $E(\Lambda,v) \nabla_x\Lambda$ belongs merely to $L_2$, so that the drift term is not well-defined. Here, we take advantage of the $L_\infty$-boundedness of the age averages of $u$ to set up a weak formulation which complies with the available regularity of $u$. Concerning compactness estimates for $u$, they rely on the just mentioned $L \ln{L}$-estimate derived from the specific structure of \eqref{1} as well as a degenerate parabolic equation in the variables $t$ and $x$ satisfied by the age averages of $u$. While the former guarantees the weak compactness for $u$ in $L_1$, the latter provides the expected strong compactness on the age averages of $u$ thanks to our assumptions on the data.

We shall also remark that one can use different approaches to investigate the existence of solutions to age structured equations with (non-degenerate) diffusion, including integrated semigroups, perturbation arguments, or using solutions integrated along characteristics (e.g., see \cite{MagalThieme, RhandiSchnaubelt, WebbSpringer} and the references therein.) However, handling such equations by discretizing with respect to the age variable seems to be a novel approach. Let us also point out that the method used in \cite{MMNP} to tackle the case of age structure with quasi-linear non-degenerate diffusion is apparently not applicable in the present situation due to the additional drift term.

\begin{rem}\label{remMKK}
In the particular case where $\lambda(a)=m_0\ e^{a/\tau}$, $b(a) = m_1\ e^{a/\tau}$, and $\mu(a)=m_2$ for some positive real numbers $m_0$, $m_1$, $m_2$, and $\tau$, a closed system for the evolution of $\Lambda$ and $v$ can be derived from \eqref{1}-\eqref{2} and reads
\begin{eqnarray*}
\partial_t \Lambda &\,=\,& \divv_x \big( \left( D(\Lambda) \,+\, \Lambda\, E(\Lambda,v) \big)\, \nabla_x\Lambda \right)\, + \left( \frac{1}{\tau} \,-\,m_2 \right)\, \Lambda\ , \quad (t,x)\in (0,\infty)\times\Om\ ,\\
\partial_t v &\,=\,& \big(g(v)\,-\,\xi(v)\big)\, v\,+ \frac{m_1\, m_2}{m_0}\, \Lambda\ , \quad (t,x)\in (0,\infty)\times\Om\ .
\end{eqnarray*}
The analysis performed in \cite{MedvedevEtal} actually focuses on this ``reduced'' system with the choice of $$D(\Lambda)\,+\, \Lambda\, E(\Lambda,v)\,=\,\frac{D_0 \Lambda}{\Lambda+kv}\ ,
$$
where $D_0,k>0$.
\end{rem}

\section{Existence}

Regarding the data in \eqref{1}-\eqref{6} we will assume that the following hypotheses hold:

\begin{itemize}
\item[$(h_1)$] The functions $\xi, g\in L_\infty(\R)\cap \mathcal{C}^1(\R)$ are such that $\xi(s)=0$ for $s\le 0$ and $ 0\le \xi(s)\le g(s)$ for $s\ge 0$.

\item[$(h_2)$] The function $b\in \mathcal{C}^1([0,\infty))$ is
non-decreasing with $b(0)=1$ and $b(a)\to\infty$ as $a\to\infty$, 
and there exists a number $B_0\in (0,\infty)$ such that
$$
   b(a+\alpha)\,\le\, (B_0\,\alpha\, +\,1)\, b(a)\ ,\qquad a> 0\ ,\quad \alpha\in (0,1)\ .
$$

\item[$(h_3)$] The function $\lambda\in \mathcal{C}^1([0,\infty))$ is non-negative, satisfies
$\ell_0:=\inf\lambda>0$, and there exists a number  $L_0\in(0,\infty)$
such that
$$
\begin{array}{ll}
 \lambda(a+\alpha)\,\le\, (L_0\,\alpha\, +\,1)\, \lambda(a)\ , & \qquad a> 0\ ,\quad \alpha\in (0,1)\ ,\\
  \lambda(a-\alpha)\,\le\, (L_0\,\alpha\, +\,1)\, \lambda(a)\ , & \qquad a> \alpha\ ,\quad \alpha\in (0,1)\ .
\end{array}
$$

\item[$(h_4)$] The function $\mu\in L_\infty(0,\infty)$ is
non-negative, and there exists a number $\beta_0\in (1,\infty)$ such
that
$$
\mu(a)\,b(a)\,\le\, \beta_0\, \lambda(a)\,\le\, \beta_0^2\, b(a)\ ,\quad a\ge 0\ .
$$

\item[$(h_5)$] The function $D\in \mathcal{C}^2(\R)$ is non-decreasing with $D(r)>0$ for $r>0$ and $[r\mapsto (D(r)/r)^{1/2}]\in L_1(0,1)$. Moreover, for the function $\zeta_1$, defined by $\zeta_1'(r):= (D(r)/r)^{1/2}$ and $\zeta_1(0)=0$, we assume that $D'/\zeta_1'\in\mathcal{C}([0,\infty))$ and put 
$$
\kappa_1(R):=\sup_{0\le r\le R}\frac{D'(r)}{\zeta_1'(r)}\ ,\quad R>0\ .
$$

\item[$(h_6)$] The function $E\in \mathcal{C}^3(\R^2)$ is non-negative, and there is a function $\zeta_2\in \mathcal{C}^1(\R)$ such that $\zeta_2(0)=0$, $\zeta_2'(r)>0$ for $r>0$, $E/\zeta_2'\in \mathcal{C}([0,\infty)\times [0,\infty))$, and 
$$
\kappa_2(R)\ \zeta_2'(r)^2 \le E(r,s) \le \kappa_3(R)\ \zeta_2'(r)\ , \quad (r,s) \in [0,R]\times [0,R]\ ,
$$
for some constants $\kappa_2(R)>0$, $\kappa_3(R)>0$ and all $R>0$.
\end{itemize}

\noindent Note that $(h_2)$ and $(h_3)$ are satisfied, e.g., by
$b(a)=\lambda(a)=e^{a/\tau}$, $a\ge 0$, with $\tau>0$, which is one of the choices of $\lambda$ and $b$ in \cite{Ayati1, Ayati2, EsipovShapiro, Frenod, MedvedevEtal}. Also note that the function $D$ in $(h_5)$ may be such that $D(0)=0$, that is, we may allow for a degeneracy of the diffusion coefficient $D(r)$ at $r=0$. For example, $D(r):=D_0r^\theta$ with $D_0>0$ and $\theta\ge 1$ (or $\theta=0$) satisfies $(h_5)$. Following \cite{Ayati2}, the function $E(r):=D'(r)=\theta D_0 r^{\theta-1}$ fulfills assumption $(h_6)$ in this case.

\begin{deff}\label{D} Suppose $(h_1)-(h_6)$. A (global) {\it weak solution} to \eqref{1}-\eqref{6} is a pair of non-negative functions $(u,v)$ possessing, for each $T>0$, the regularity
$$
u\in L_\infty(0,T;L_1((0,\infty)\times\Om; b(a) \rd a\rd x))\,, \quad v\in \mathcal{C}^1([0,T];L_\infty(\Om))\,,
$$
$$
\Lambda\,:=\int_0^\infty \lambda(a)\, u(.,a,.)\, \rd a\in L_\infty((0,T)\times\Om)\,, \quad \zeta_j(\Lambda)\in L_2(0,T;W_2^1(\Om))\,, \quad j=1,2\,, 
$$
and satisfying $v(0)=v^0$, 
$$
\partial_t v \,= \,\big(g(v)\,-\,\xi(v)\big)\, v\,+\int_0^\infty
b(a)\,\mu(a)\, u(.,a,.)\, \rd a\  \;\;\mbox{ a.e. in }\;\; (0,T)\times\Om\,,
$$
and
\bqnn
\begin{split}
0  = & \int_0^T \int_\Om \int_0^\infty \left( \partial_t \varphi + \partial_a \varphi - \mu\, \varphi \right)\ u\ \rd a \rd x \rd t + \int_0^T \int_\Om \varphi(t,0,x)\ \xi(v(t,x))\ v(t,x)\ \rd x \rd t \\
&\quad +  \int_\Om \int_0^\infty \varphi(0,a,x)\ u^0(a,x)\ \rd a \rd x + \int_0^T \int_\Om \int_0^\infty \Delta_x\varphi\ D(\Lambda)\ u\  \rd a \rd x \rd t \\
&\quad -  \int_0^T \int_\Om \left\{ \frac{E(\Lambda,v)}{\zeta_2'(\Lambda)}\ \nabla_x \zeta_2(\Lambda) - \frac{D'(\Lambda)}{\zeta_1'(\Lambda)}\ \nabla_x \zeta_1(\Lambda)\right\} \cdot \left( \int_0^\infty u\ \nabla_x\varphi\  \rd a\right)\ \rd x \rd t
\end{split}
\eqnn
for any test function $\varphi\in\mathcal{C}^2([0,T)\times(0,\infty)\times\bar{\Om})$ with compact support and $\partial_\nu \varphi(t,a,x)=0$ for $(t,a,x)\in (0,T)\times (0,\infty)\times\partial\Om$.
\end{deff}

Observe that the regularity required on $u$, $v$, and $\Lambda$ along with assumptions $(h_3)$, $(h_5)$, and $(h_6)$ ensure that all the terms in the weak formulation for \eqref{1} are meaningful. In particular, $E(\Lambda,v)/\zeta_2'(\Lambda)$ and $D'(\Lambda)/\zeta_1'(\Lambda)$ are both bounded by $(h_5)$ and $(h_6)$ while $(h_3)$ implies that
$$
\left|\int_0^\infty u\ \nabla_x\varphi\  \rd a \right| \le \frac{\|\nabla_x\varphi\|_\infty}{\ell}\ \Lambda \in L_\infty((0,T)\times\Om)\, .
$$
Let us emphasize here once more that only the averages of $u$ with respect to age have the needed integrability properties for the weak formulation to make sense. A related situation is encountered in the theory of renormalized solutions for the Boltzmann equation \cite{DPL89} and parabolic equations (see, e.g., \cite{AmmarWittbold, BM97,BMR01} and the references therein).

\medskip

Our main result is then the following:

\begin{thm}\label{T}
Suppose $(h_1)-(h_6)$ and let $p>N$. Then, given any non-negative
initial values $(u^0,v^0)$ satisfying
\bear
\label{id1}
& & u^0\in L_1\big((0,\infty)\times\Om ; b(a) \rd a\rd x\big)\cap L_1\big((0,\infty),W_p^1(\Om);\lambda(a) \rd a \big)\,, \quad v^0\in W_p^1(\Om)\,, \\
\label{id2}
& & u^0 \ln{u^0} \in L_1((0,\infty)\times\Om; \lambda(a) \rd a\rd x)\,,
\eear
there exists a global weak solution to \eqref{1}-\eqref{6}. 
\end{thm}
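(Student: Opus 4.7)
The plan is to construct a weak solution by approximation and compactness, along the lines sketched in the introduction. Fix $T>0$ and a small parameter $\ve>0$, and regularize simultaneously by (i) replacing $D$ with $D+\ve$, (ii) truncating $E$ to a bounded smooth $E_\ve$, (iii) adding an artificial viscosity $\ve\Delta_x v$ in \eqref{2} with homogeneous Neumann conditions, and (iv) discretizing \eqref{1} in age on a uniform grid $a_j=j\alpha$, $\alpha=\alpha(\ve)\to 0$, with a backward difference for $\partial_a u$ and the nonlinear boundary coupling \eqref{4} imposed at $j=0$. The resulting finite system of strongly coupled quasilinear parabolic equations in $(t,x)$ for the nodal values $u^{(j)}_\ve$ (coupled through the discrete analogue of $\Lambda_\ve$ to $v_\ve$) is uniformly parabolic with smooth coefficients and therefore falls under Amann's abstract theory, yielding a unique non-negative $W_p^1$-solution on a maximal interval; the $L_\infty$-bound on $v_\ve$ coming from $(h_1)$ rules out blow-up and extends the solution to $[0,T]$.

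Next I would derive the uniform a~priori estimates. Testing the discrete \eqref{1} against $b(a)$ and using $(h_2)$, $(h_4)$ yields a uniform $L_\infty(0,T;L_1((0,\infty)\times\Om;b(a)\rd a\rd x))$-bound on $u_\ve$, while \eqref{2} and $(h_1)$ give an $L_\infty$-bound on $v_\ve$ and $\lambda\le\beta_0 b$ from $(h_4)$ an $L_\infty$-bound on $\Lambda_\ve$. The decisive estimate, which will appear as Lemma~\ref{L2.5}, is an entropy dissipation: testing \eqref{1} against $\lambda(a)(1+\ln u_\ve)$ and using $\int_0^\infty \lambda\,\nabla_x u_\ve\,\rd a=\nabla_x\Lambda_\ve$, the diffusion and drift contributions to $\tfrac{\rd}{\rd t}\int \lambda u_\ve\ln u_\ve\,\rd a\rd x$ equal, up to lower-order terms,
\[
\int_\Om \Big( D(\Lambda_\ve) \int_0^\infty \lambda\, \frac{|\nabla_x u_\ve|^2}{u_\ve}\,\rd a \,+\, E(\Lambda_\ve,v_\ve)\, |\nabla_x\Lambda_\ve|^2 \Big)\,\rd x\, ,
\]
which by Cauchy--Schwarz, the identity $\zeta_1'(r)^2=D(r)/r$ and the lower bound $E\ge\kappa_2(\zeta_2')^2$ from $(h_6)$, dominates $\|\nabla_x\zeta_1(\Lambda_\ve)\|_{L_2}^2+\kappa_2\|\nabla_x\zeta_2(\Lambda_\ve)\|_{L_2}^2$. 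This simultaneously produces the $L_\infty(0,T;L_1(u\ln u\,\lambda\rd a\rd x))$-bound (hence equi-integrability and weak $L_1$-compactness of $\{u_\ve\}$) and uniform $L_2(0,T;W_2^1(\Om))$-bounds on $\zeta_j(\Lambda_\ve)$, $j=1,2$.

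Then I would extract strong compactness. Integrating \eqref{1} in $a$ against $\lambda$, the age-average $\Lambda_\ve$ satisfies a degenerate parabolic equation
\[
\partial_t \Lambda_\ve \,=\, \Delta_x \Phi(\Lambda_\ve) \,+\, \divv_x\!\big(\Lambda_\ve\, E(\Lambda_\ve,v_\ve)\,\nabla_x\Lambda_\ve\big) \,+\, h_\ve\, ,\qquad \Phi'=D\, ,
\]
where $h_\ve$ collects the boundary term $-\lambda(0)\xi(v_\ve)v_\ve$ and the age integrals involving $\lambda'$ and $\mu$, all uniformly bounded. Rewriting the flux terms through $\nabla_x\zeta_j(\Lambda_\ve)$ and exploiting the boundedness of $D'/\zeta_1'$ and $E/\zeta_2'$ from $(h_5)$--$(h_6)$, the fluxes lie uniformly in $L_2((0,T)\times\Om)$, whence $\partial_t\Lambda_\ve$ is bounded in $L_2(0,T;W_2^{-1}(\Om))$. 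Together with the $L_2(0,T;W_2^1(\Om))$-bound on the strictly increasing $\zeta_1(\Lambda_\ve)$, Aubin--Lions yields strong $L_2$-compactness of $\Lambda_\ve$. The bound $b\mu\le\beta_0\lambda$ from $(h_4)$ combined with the weak $L_1$-convergence of $u_\ve$ then forces strong $L_1$-convergence of the dedifferentiation source $\int_0^\infty b\mu\, u_\ve\,\rd a$, so that $v_\ve$ converges strongly in $\mathcal{C}([0,T];L_q(\Om))$ for every $q<\infty$.

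With these convergences every term in the weak formulation of Definition~\ref{D} passes to the limit. The drift and diffusion terms are handled by the renormalized rewriting
\[
u_\ve\, E(\Lambda_\ve,v_\ve)\,\nabla_x\Lambda_\ve \,=\, u_\ve\, \frac{E(\Lambda_\ve,v_\ve)}{\zeta_2'(\Lambda_\ve)}\, \nabla_x \zeta_2(\Lambda_\ve)\, ,
\]
and analogously for $D$ after one integration by parts in $x$: the bounded continuous factors $E/\zeta_2'$ and $D'/\zeta_1'$, composed with the strongly convergent $(\Lambda_\ve,v_\ve)$, converge strongly in every $L_q$; the $\nabla_x\zeta_j(\Lambda_\ve)$ converge weakly in $L_2$; and the age averages $\int_0^\infty u_\ve\nabla_x\varphi\,\rd a$, bounded in $L_\infty$ by $\Lambda_\ve\|\nabla_x\varphi\|_\infty/\ell_0$ thanks to $(h_3)$, acquire strong $L_2$-compactness by an Aubin--Lions argument parallel to Step~3 applied to this smooth-weight age average. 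The boundary term at $a=0$ is handled by the strong convergence of $v_\ve$ and the continuity of $\xi$. I expect Step~3 to be the main obstacle: the lower bound $\ell_0>0$ in $(h_3)$ is precisely what prevents the diffusion for $\Lambda_\ve$ from degenerating on large sets and makes the Aubin--Lions compactness usable, so the identification of $g(v)$, $\xi(v)$ and $E(\Lambda,v)$ in the limit hinges on this assumption; this is exactly the structural restriction flagged in Remark~\ref{Miraculix} as excluding the pure-drift case $D\equiv 0$.
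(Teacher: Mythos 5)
Your overall scheme mirrors the paper's: age discretization of \eqref{1}, quasilinear-parabolic theory \`a la Amann for the approximate system, an entropy ($L\ln L$) estimate giving both weak $L_1$-compactness for $u$ and $L_2$-gradient bounds on $\zeta_1(\Lambda)$, $\zeta_2(\Lambda)$, strong compactness of age averages, and a renormalized weak formulation in the limit. However, two steps of the proposal have genuine gaps that are exactly the points where the paper does something nontrivial.

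First, in Step 1 the claim that ``the $L_\infty$-bound on $v_\ve$ coming from $(h_1)$ rules out blow-up'' is not enough: to apply \cite[Thm.15.5]{Amann93} one also needs $L_\infty$-bounds on each discrete nodal value $u^{(j)}_\ve$ and on the discrete $\Lambda$. With the drift term $u\, E\, \nabla_x\Lambda$ present, no such bound comes for free. The paper inserts a cut-off $\Theta(\alpha^2 u_i)$ in the drift coefficient (equation \eqref{10}) and introduces $\Lambda$ as a \emph{separate} unknown solving \eqref{12}, so that the diffusion matrix is upper triangular (normally elliptic in separated divergence form) and a comparison function $k(t)=1/\alpha^2+\Xi t/\alpha$ yields $u_i\le k(t)$; consistency $\Lambda=\alpha\sum\lambda_i u_i$ is only recovered on a time interval where the cut-off is inactive (Proposition~\ref{P2.1}). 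Your proposal, which seems to plug the discrete $\Lambda$ directly into the $u^{(j)}$-equations, loses both the triangular structure and the $L_\infty$-control, and the global existence argument does not close.

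Second, in Step 3 the invocation of Aubin--Lions is not valid as stated: you have an $L_2(0,T;W_2^1)$-bound on $\zeta_1(\Lambda_\ve)$ and an $L_2(0,T;W_2^{-1})$-bound on $\partial_t\Lambda_\ve$, but these concern two \emph{different} quantities; the classical Aubin--Lions/Simon lemmas require the space- and time-estimates to hold for the same function. Moreover, even if you obtain strong convergence of $\Lambda_\ve=\mcm_\lambda$, the subsequent deduction that $\int_0^\infty b\mu\, u_\ve\,\rd a$ converges strongly ``by $b\mu\le\beta_0\lambda$ and weak $L_1$-convergence of $u_\ve$'' does not follow: strong convergence of one particular weighted age average plus weak convergence of $u_\ve$ does not imply strong convergence of a different weighted age average. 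The paper resolves both issues at once (Lemmas~\ref{Leasy} and \ref{L3.3}): for \emph{each} compactly supported weight $\chi$ it builds a nonlinear transform $\mcd_\chi$ with $\mcd_\chi''(r)\le D(\ell|r|/\|\chi\|_\infty)$, so that $\mcd_\chi''(\mcm_\chi)\le D(\Lambda)$; then both $\nabla_x\mcd_\chi(\mcm_{\chi,\alpha})\in L_2$ and $\partial_t\mcd_\chi(\mcm_{\chi,\alpha})\in L_1(0,T;(W_{N+1}^1)')$ are estimated for the \emph{same} quantity, Simon's compactness theorem applies, and strict monotonicity of $\mcd_\chi$ (using $D>0$ on $(0,\infty)$) upgrades this to strong compactness of $\mcm_{\chi,\alpha}$ for arbitrary $\chi$. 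The strong convergence of $\Lambda_\alpha$ and of $\int b\mu\,u_\alpha\,\rd a$ is then obtained by combining this with the tail estimate of Lemma~\ref{highlight} (needed because $\lambda$ and $b\mu$ are unbounded on $(0,\infty)$; this tail control, which also enters the Dunford--Pettis step because the age domain is unbounded, is missing from your Step~2). Apart from these two points, your proposal tracks the paper's proof, including the renormalized rewriting of the drift and diffusion terms in Step~4.
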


The regularity assumptions on the data $D$, $b$, $\lambda$, and the initial data $u^0$, $v^0$ could be weakened, see Remark~\ref{hinkelstein}.

\section{A Regularized Problem}\label{sect2}

The basic idea to handle age structure is to discretize equation
\eqref{1} with respect to the age variable $a\in (0,\infty)$. To
that end we fix $I\in\N$ and $\alpha\in (0,1)$. Let then
$$
(b_i)_{1\le i\le I+1}\ ,\quad (\lambda_i)_{1\le i\le I+1} \
,\quad\text{and} \quad (\mu_i)_{1\le i\le I+1}
$$
be non-negative numbers such that
    \bqn\label{8}
    \begin{array}{l}
    b_i\ge 1\,, \qquad \lambda_i\ge\ell>0\,,\\
    0\,\le\, b_i^*\,\le\, B\, b_i\ ,\qquad \lambda_i^*\,\le\, L\,
    \lambda_i\ ,\qquad \mu_i\,\le M\ ,\qquad \mu_i\, b_i\,\le\,\beta \,\lambda_i\,\le\, \beta^2\, b_i\ ,
    \end{array}
    \eqn
for some positive numbers $\ell, B, L, M$, and $\beta$, where we put
$$
b_i^*\,:=\,\dfrac{b_{i+1}-b_i}{\alpha} \ ,\qquad
\lambda_i^*\,:=\,\dfrac{\lambda_{i+1}-\lambda_i}{\alpha}\ ,\qquad
i=1,\ldots,I\ .
$$
Moreover, let $\Theta\in \mathcal{C}^\infty(\R)$ be a cut-off function
satisfying
    \bqn\label{Theta}
    \Theta\ge 0\ ,\qquad \Theta'\,\le\, 0\ ,\qquad \Theta(r)\,=\,
    1\quad\text{for}\quad r\le 1/2\qquad \text{and}\qquad
    \Theta(r)=0\quad\text{for}\quad r\ge 1\ .
    \eqn
Finally, let $\xi$, $g$, $D$, and $E$ be functions satisfying $(h_1)$, $(h_5)$, $(h_6)$ together with
    \bqn\label{17}
    D(r)\ge d_0>0\ ,\quad (1+s)\,\xi(s) + E(r,s) \le \Xi\,, \quad (r,s)\in \R^2 \,,
    \eqn
for some constants $d_0>0$ and $\Xi>0$.

\medskip

We then look for a solution $(u_1,\ldots,u_I,\Lambda,v)$ to
the approximating problem
    \begin{align}
&    \partial_t u_i+\frac{1}{\alpha}(u_i-u_{i-1})= \divv_x\left(
    D(\Lambda)\nabla_x u_i+ u_i \Theta(\alpha^2 u_i)\, E(\Lambda,v)\,              \nabla_x\Lambda)\right)-\mu_i u_i\ ,\label{10}\\
&    \partial_t\Lambda=
    \divv_x\left(\left[D(\Lambda)+\sum_{i=1}^I \alpha
    \lambda_i u_i E(\Lambda,v) \right]\nabla_x \Lambda\right)
    +\lambda_1 \xi(v)v +\sum_{i=1}^{I}
    \alpha(\lambda_i^*-\mu_i\lambda_i)u_i - \lambda_{I+1}\ u_I\ ,\label{12}\\
&    \partial_t v= \alpha \Delta_x v+\big(g(v)-\xi(v)\big)v +\alpha
    \sum_{i=1}^I b_i \mu_i u_i\label{13}
    \end{align}
for $i=1,\ldots,I$ and $(t,x)\in (0,\infty)\times\Om$, where
    \bqn\label{14}
    u_0\,:=\, \xi(v)v\ ,
    \eqn
and subject to
    \bqn\label{15}
   \partial_\nu u_i=\partial_\nu \Lambda = \partial_\nu v=0\ ,\qquad (t,x)\in (0,\infty)\times\partial\Om\ ,
    \eqn
and
    \bqn\label{16}
    u_i(0,x)=u_i^0(x)\ ,\quad
    \Lambda(0,x)=\Lambda^0(x):=\alpha \sum_{j=1}^I \lambda_j
    u_j^0(x)\ , \quad v(0,x)=v^0(x)\ ,\qquad x\in\Om\ ,
    \eqn
for $ i=1,\ldots,I$. We will first show that
\eqref{10}-\eqref{16} possesses a classical solution and then
derive some uniform bounds on this solution.

\subsection{Global Existence}

In this subsection we prove the global well-posedness of
\eqref{10}-\eqref{16}. More precisely, we have the following
result.

\begin{prop}\label{P2.1}
Let $p>N$, $\alpha\in (0,1)$, and consider
$(u_1^0,\ldots,u_I^0,v^0)\in W_p^1(\Om,\R^{I+1})$ with
    \bqn\label{18}
    0\le u_i^0(x)\le \frac{1}{4\,\alpha^2}\ ,\qquad 0\le v^0(x)\ ,\qquad x\in \Om\ .
    \eqn
Then there exists a unique classical solution
$$
(u_1,\ldots,u_I,\Lambda,v)\in
\mathcal{C}\big([0,\infty)\times\bar{\Om},\R^{I+2}\big)\cap
\mathcal{C}^{1,2}\big((0,\infty)\times\bar{\Om},\R^{I+2}\big)
$$
to problem \eqref{10}-\eqref{16} such that $u_i(t,x)\ge 0$ and
$v(t,x)\ge 0$ for $i=1,\ldots,I$ and $(t,x)\in
[0,\infty)\times\bar{\Om}$. Moreover, setting
    \bqn\label{19}
    t_\alpha^*:=\sup\left\{t>0\,;\, \max_{1\le i\le I} \sup_{\tau\in [0,t]} \|u_i(\tau)\|_\infty  \le
    \frac{1}{2\alpha^2}\right\}\,>0\ ,
    \eqn
we have
    \bqn\label{20}
    \Lambda(t,x)=\alpha\sum_{i=1}^I \lambda_i u_i(t,x)\ ,\quad
    (t,x)\in [0,t_\alpha^*)\times\bar{\Om}\ .
    \eqn
\end{prop}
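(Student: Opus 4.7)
The plan is to apply Amann's quasilinear parabolic theory \cite{Amann93} to get a unique maximal classical solution, to use the triangular sign structure of the reactions for non-negativity, to identify $\Lambda$ with $\alpha\sum\lambda_i u_i$ on $[0,t_\alpha^*)$ by uniqueness for a scalar linear parabolic equation, and finally to extend the solution to $[0,\infty)$ through a priori $L_\infty$ bounds made available by the regularization \eqref{17} and the cutoff $\Theta$. The step I expect to demand the most care is the identification of $\Lambda$ with the age-weighted sum: it relies on a discrete product-rule calculation whose output must reproduce exactly the source terms built into \eqref{12}, notably the coefficient $-\lambda_{I+1}u_I$ and the discrete derivatives $\lambda_i^*$.

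I first view \eqref{10}--\eqref{16} as a quasilinear parabolic system for $\mathbf{w}:=(u_1,\ldots,u_I,\Lambda,v)$ on $\Omega$ with homogeneous Neumann boundary conditions \eqref{15}. Its principal part is block-diagonal, with diffusion coefficient $D(\Lambda)$ on each $u_i$-slot, $D(\Lambda)+\alpha\sum_j\lambda_j u_j E(\Lambda,v)$ on the $\Lambda$-slot, and the constant $\alpha$ on the $v$-slot; each is bounded below by $d_0>0$ on the positive cone thanks to \eqref{17}, $(h_3)$, and the non-negativity of $E$. The drift $u_i\Theta(\alpha^2 u_i)E(\Lambda,v)\nabla_x\Lambda$ and the zero-order terms are smooth lower-order perturbations by \eqref{Theta}, $(h_1)$, $(h_5)$, and $(h_6)$. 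Amann's theorem in $W_p^1(\Omega,\mathbb{R}^{I+2})$ with $p>N$ thus delivers a unique maximal classical solution on some interval $[0,T_\mathrm{max})\times\bar\Omega$ of the stated regularity. Non-negativity follows from invariance of the positive cone: $u_0=\xi(v)v\ge 0$ by $(h_1)$ regardless of the sign of $v$; each $u_i$-equation has the non-negative source $u_{i-1}/\alpha$ once $u_{i-1}\ge 0$ is known inductively; and the $v$-equation has the non-negative source $\alpha\sum b_i\mu_i u_i$ once $u_i\ge 0$. Testing by $(u_i)^-$ and $v^-$ and invoking Gronwall (using that the drift coefficient $u_i\Theta(\alpha^2u_i)E$ is uniformly bounded) makes this rigorous.

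For \eqref{20}, continuity of $t\mapsto\max_i\|u_i(t)\|_\infty$ and \eqref{18} imply $t_\alpha^*>0$; on $[0,t_\alpha^*)$ we have $\alpha^2 u_i\le 1/2$, so $\Theta(\alpha^2 u_i)=1$ by \eqref{Theta}. Setting $\tilde\Lambda:=\alpha\sum_{i=1}^I\lambda_i u_i$, multiplying \eqref{10} by $\alpha\lambda_i$ and summing in $i$, the telescoping identity
\bqnn
\sum_{i=1}^I\lambda_i(u_i-u_{i-1})\,=\,\lambda_{I+1}u_I\,-\,\lambda_1\xi(v)v\,-\,\alpha\sum_{i=1}^I\lambda_i^* u_i\,,
\eqnn
which uses $u_0=\xi(v)v$ and $\alpha\lambda_i^*=\lambda_{i+1}-\lambda_i$, combined with the assembly of the drift contributions into $\divv_x(\tilde\Lambda E(\Lambda,v)\nabla_x\Lambda)$—the precise coefficient appearing with $E$ in \eqref{12}—shows that $\tilde\Lambda$ satisfies \eqref{12} exactly. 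Hence $w:=\Lambda-\tilde\Lambda$ solves the linear problem $\partial_t w=\divv_x(D(\Lambda)\nabla_x w)$ on $[0,t_\alpha^*)\times\Omega$ with homogeneous Neumann data and $w(0,\cdot)\equiv 0$, the latter by the definition of $\Lambda^0$ in \eqref{16}; uniqueness for this non-degenerate scalar equation gives \eqref{20}. Global existence then follows from Amann's continuation criterion: the cutoff gives $r\Theta(\alpha^2 r)\le 1/\alpha^2$, so the drift coefficient in \eqref{10} is uniformly bounded by $\Xi/\alpha^2$ via \eqref{17}; the parabolic maximum principle applied inductively in $i$ yields $\|u_i(t)\|_\infty\le C_i(T)$ on $[0,T]$ for $T<T_\mathrm{max}$, Gronwall in \eqref{13} bounds $\|v\|_\infty$, and $\|\Lambda\|_\infty$ is controlled via \eqref{12} (or via \eqref{20} on $[0,t_\alpha^*)$). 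Standard $L_p$-$L_q$ parabolic regularity bootstraps these $L_\infty$-bounds to $W_p^1$-bounds, precluding blow-up and giving $T_\mathrm{max}=\infty$.
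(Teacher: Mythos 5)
Your overall plan mirrors the paper's (Amann's quasilinear theory for existence and uniqueness, comparison/cone-invariance for sign, the telescoping identity for \eqref{20}, and $L_\infty$-bounds for continuation), and your verification of \eqref{20} is correct and is essentially the paper's argument phrased in terms of the difference $w=\Lambda-\tilde\Lambda$. However, your description of the structure used to apply Amann's theory is wrong and would undercut the argument if taken literally. You write that ``the principal part is block-diagonal'' and that the drift $u_i\Theta(\alpha^2 u_i)E(\Lambda,v)\nabla_x\Lambda$ is a ``smooth lower-order perturbation.'' It is not: $\divv_x\big(u_i\Theta(\alpha^2u_i)E(\Lambda,v)\nabla_x\Lambda\big)$ contributes the term $u_i\Theta(\alpha^2u_i)E(\Lambda,v)\Delta_x\Lambda$, a genuine second-order cross-coupling of the $u_i$-equation to the unknown $\Lambda$. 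Thus the diffusion matrix of the system is not block-diagonal but \emph{upper triangular} (nonzero entries on the diagonal and in the $\Lambda$-column of each $u_i$-row). Normal ellipticity holds because a triangular matrix has its eigenvalues on the diagonal, and the diagonal entries are bounded below by $d_0>0$ via \eqref{17}; this is exactly the point the paper makes (``separated divergence form'' in the sense of Amann). Your ``block-diagonal + lower-order perturbation'' framing would only be valid for a fixed-point/decoupling scheme where $\Lambda$ is treated as a known datum, which is not what is being done.

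The same structural point matters for the continuation to $[0,\infty)$. You obtain $L_\infty$-bounds (as does the paper, via the explicit supersolution $k(t)=1/\alpha^2+\Xi t/\alpha$) and then assert ``standard $L_p$-$L_q$ parabolic regularity bootstraps these to $W_p^1$-bounds.'' For a genuinely quasilinear system a bare $L_\infty$-bound does not by itself preclude blow-up of higher norms; one needs an a priori modulus-of-continuity or a theorem that accepts $L_\infty$-bounds for triangular systems. The paper invokes \cite[Thm.15.5]{Amann93}, which is tailored precisely to normally elliptic systems with \emph{upper triangular} principal part and converts pointwise bounds into global existence; it is the triangular structure that makes $L_\infty$-control sufficient. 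Your sketch skips this and, combined with the ``block-diagonal'' mischaracterization, leaves the continuation step without a correct justification. The fix is small — identify the matrix as upper triangular, note that its eigenvalues are the diagonal entries, and cite the continuation result for triangular systems — but it is a genuine gap as written.
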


\begin{proof}
We fix $\eta$ such that
$$0<\eta<\frac{d_0}{\alpha\, \|E\|_\infty\, I\,\max\limits_{1\le i\le
I}\lambda_i}
$$
and put $D_0:=(-\eta,\infty)^{I+2}$. Moreover, we define
$a:=(a^{m,n})\in \mathcal{C}^2\big(D_0,\mathcal{L}(\R^{I+2})\big)$ by
$$
a(y)\, :=\,\left(
  \begin{array}{c c c c c c}
    D(y_{I+1})&  0 & \ldots & 0 & y_1\Theta(\alpha^2 y_1)E(y_{I+1},y_{I+2}) & 0\\
    0   & D(y_{I+1}) & \ddots & \vdots & \vdots & \vdots\\
     \vdots  & & \ddots & 0 & \vdots&  \\
     \vdots  &  & \ddots & D(y_{I+1})& y_I\Theta(\alpha^2
     y_I)E(y_{I+1},y_{I+2})&\vdots  \\
     \vdots & & & 0 &
     D(y_{I+1})+\alpha\sum\limits_{i=1}^I \lambda_i
     E(y_{I+1},y_{I+2}) y_i & 0\\
     0 &\ldots & \ldots & 0 & 0& \alpha
  \end{array} \right)
$$
for $y=(y_1,\ldots,y_{I+2})\in D_0$. We next set $a_{j,k}(y):=
a(y)\delta_{j,k}$ for $1\le j,k\le N$ and $y\in D_0$, and we
introduce the operators
$$
    \mathcal{A}(y) z:= -\sum_{j,k=1}^N
    \partial_j\big(a_{j,k}(y)\partial_k z\big)\ ,\qquad
    \mathcal{B}(y) z:= \sum_{j,k=1}^N \nu^j\, a_{j,k}(y)\partial_k
    z
$$
for $z=(z_1,\ldots,z_{I+2})$ and the function
$$
f(y)\, :=\,\big(f^m(y)\big)_{1\le m\le I+2}\,:=\,\left(
  \begin{array}{c}
  \displaystyle{ -\mu_1 y_1 -\frac{1}{\alpha}\left(y_1-\xi(y_{I+2}) y_{I+2}\right)}\\
  \\
  \displaystyle{ -\mu_2 y_2 -\frac{1}{\alpha} (y_2-y_1)}\\
  \\
   \vdots\\
   \\
  \displaystyle{ -\mu_I y_I -\frac{1}{\alpha} (y_I-y_{I-1})}\\
  \\
   \displaystyle{\lambda_1\xi(y_{I+2})y_{I+2}+\alpha\sum\limits_{i=1}^I
   (\lambda_i^*-\mu_i\lambda_i)y_i - \lambda_{I+1} y_I}\\
   \\
   \displaystyle{(g-\xi)(y_{I+2})y_{I+2} +\alpha \sum\limits_{i=1}^I b_i \mu_i y_i}
  \end{array} \right) \ .
$$
With these notations, an abstract formulation of
\eqref{10}-\eqref{16} reads
    \begin{align*}
    \partial_t z+\mathcal{A}(z)z&= f(z)\ ,\\
    \mathcal{B}(z)z&=0\ ,\\
    z(0)&=(u_1^0,\ldots,u_I^0,\Lambda^0,v^0)\ .
    \end{align*}
Clearly, owing to \eqref{17} and the choice of $\eta$, the
eigenvalues of $a(y)$ are positive for each $y\in D_0$, and the
boundary-value operator $(\mathcal{A},\mathcal{B})$ is of
separated divergence form in the sense of
\cite[Ex.~4.3(e)]{Amann93}. Consequently, the boundary-value
operator $(\mathcal{A},\mathcal{B})$ is normally elliptic. It then
follows from \cite[Thm.14.4, Thm.14.6]{Amann93} that
\eqref{10}-\eqref{16} has a unique maximal classical solution
$$
z= (u_1,\ldots, u_I, \Lambda,v)\in
\mathcal{C}\big([0,t^+)\times\bar{\Om},D_0\big)\cap
\mathcal{C}^{1,2}\big((0,t^+)\times\bar{\Om},\R^{I+2}\big)\ ,
$$
where $t^+\in (0,\infty]$ denotes the maximal time of existence.
Observe that $a^{1,n}(0,y_2,\ldots,y_{I+2})=0$ for
$n\in\{2,\ldots,I+2\}$ and $f^1(0,y_2,\ldots,y_{I+2})\ge 0$ by
$(h_1)$. Therefore, \cite[Thm.15.1]{Amann93} ensures that
$u_1(t,x)\ge 0$ for $(t,x)\in [0,t^+)\times\bar{\Om}$. Now, $u_2$
solves
$$
\partial_t u_2 -\divv_x\big(D(\Lambda)\nabla_x u_2+u_2
\Theta(\alpha^2 u_2) E(\Lambda,v) \nabla_x \Lambda \big)
+\left( \mu_2+\frac{1}{\alpha} \right)u_2\,=\, \frac{1}{\alpha} u_1\,\ge\, 0
$$
with a non-negative initial condition, which readily entails that
$u_2(t,x)\ge 0$ for $(t,x)\in [0,t^+)\times\bar{\Om}$ by the
comparison principle. Proceeding by induction, we obtain in a
similar way that $u_i(t,x)\ge 0$ for $(t,x)\in
[0,t^+)\times\bar{\Om}$ and $i\in \{1,\ldots,I\}$. The same
argument gives $v(t,x)\ge 0$ for $(t,x)\in
[0,t^+)\times\bar{\Om}$.

We next show that $t^+=\infty$. To that end we define the
parabolic operator $\mathcal{L}_1$ by
    \bqnn
    \begin{split}
    \mathcal{L}_1\,w \,:=\, \partial_t
    &w\,-\,\divv_x\big(D(\Lambda)\nabla_x w\big)\, -\, \big(\alpha^2
    u_1\Theta'(\alpha^2 u_1)\,+\,\Theta(\alpha^2 u_1)\big) 
    E(\Lambda,v)\nabla_x\Lambda \cdot \nabla_x w\\
    &-\, \divv_x\big( E(\Lambda,v) \nabla_x\Lambda \big)\, w\, \Theta(\alpha^2 w)\,  +\,
    \left(\frac{1}{\alpha} \,+\,\mu_1\right)w\,-\,\frac{1}{\alpha}
    \xi(v)v\ .
    \end{split}
    \eqnn
Setting $k(t):= 1/\alpha^2 +\Xi t/\alpha\ge 1/\alpha^2$ for $t\ge 0$,
we infer from $(h_1)$, \eqref{17}, and the properties of $\Theta$
that $\mathcal{L}_1 k\ge k'-\xi(v)v/\alpha\ge 0$. Therefore,
$u_1(t,x)\le k(t)$ for $(t,x)\in [0,t^+)\times\bar{\Om}$ by the
comparison principle since we assumed that $u_1(0,x)\le
1/\alpha^2$ for $x\in \Om$. Furthermore, we have $
\mathcal{L}_2 u_2=0$, where $\mathcal{L}_2$ is the parabolic operator
defined by
     \bqnn
    \begin{split}
    \mathcal{L}_2\,w \,:=\, \partial_t
    &w\,-\,\divv_x\big(D(\Lambda)\nabla_x w\big)\, -\,
    \big(\alpha^2
    u_2\Theta'(\alpha^2 u_2)\,+\,\Theta(\alpha^2 u_2)\big) E(\Lambda,v)\nabla_x\Lambda\cdot \nabla_x w\\
    &-\, \divv_x\big( E(\Lambda,v) \nabla_x\Lambda \big)\, w\, \Theta(\alpha^2 w)\,  +\,
    \left(\frac{1}{\alpha} \,+\,\mu_2\right)w\,-\,\frac{1}{\alpha}
    u_1\ .
    \end{split}
    \eqnn
Owing to the previous bound on $u_1$ and the properties of
$\Theta$, we have $\mathcal{L}_2 k\ge k'+ (k-u_1)/\alpha\ge 0$.
Using again the comparison principle we conclude that $u_2(t,x)\le
k(t)$ for $(t,x)\in [0,t^+)\times\bar{\Om}$. Proceeding
analogously for $u_i$, $i\in\{3,\ldots,I\}$, we derive that
    \bqn\label{21}
    0\le u_i(t,x)\le k(t)\ ,\qquad (t,x)\in [0,t^+)\times\bar{\Om}\
    ,\quad i\in\{1,\ldots,I\}\ .
    \eqn
Now, by $(h_1)$, \eqref{13}, and \eqref{21} we clearly have
$$
\partial_t v\,-\,\alpha \Delta_x v\,\le\, \|g\|_\infty\, v \,+\,
\alpha\, k(t) \sum_{i=1}^I b_i\, \mu_i
$$
from which we deduce that
 \bqn\label{22}
    0\,\le\, v(t,x)\,\le\, \|v^0\|_\infty\, e^{\|g\|_\infty t}\, +\, \alpha \sum_{i=1}^I b_i \,\mu_i\int_0^t k(s)\, e^{\|g\|_\infty (t-s)}\, \rd s \ ,\qquad (t,x)\in  [0,t^+)\times\bar{\Om}\ .
    \eqn
Finally, by $(h_1)$ and \eqref{21} we have
\bqnn\begin{split}
-\lambda_{I+1}\, k(t)\, - \,\alpha\sum_{i=1}^I \big(\vert\lambda_i^*\vert \,+\,\mu_i\,
\lambda_i\big)\, k(t)\,&\le\, \lambda_1 \,\xi(v)\,v\,+\,
\sum_{i=1}^I \alpha \big(\lambda_i^*\,-\,\mu_i\, \lambda_i\big)\,
u_i\, -\, \lambda_{I+1}\, u_I\\
&\le\, \lambda_1\, \Xi\,+\,\alpha \sum_{i=1}^I
\vert\lambda_i^*\vert\, k(t)\,,
\end{split}
\eqnn
so the comparison principle applied to \eqref{12} warrants that
    \bqn\label{23}
    \vert\Lambda(t,x)\vert\,\le\, \| \Lambda^0\|_\infty\, +\, \lambda_1\, \Xi\, t\,+\, \left[ \lambda_{I+1}+\alpha\sum_{i=1}^I\big(\vert \lambda_i^*\vert \,+\,\mu_i\,\lambda_i\big)\right]\int_0^t k(s)\ \rd s
\eqn
for $(t,x)\in [0,t^+)\times\bar{\Om}$. Thanks to \eqref{21},
\eqref{22}, \eqref{23}, and the upper triangular structure of the
diffusion matrix $a$, we are in a position to apply \cite[Thm.15.5]{Amann93} and conclude that indeed $t^+=\infty$.

It then remains to check \eqref{20}. First note that
$t_\alpha^*>0$ due to \eqref{18} and the continuity of
$(u_1,\ldots,u_I)$. Next, setting
$P:=\alpha\sum_{i=1}^I\lambda_i u_i$, it follows from
\eqref{10} that $P$ solves
$$
\partial_t P+\sum_{i=1}^I \lambda_i
(u_i-u_{i-1})\,=\,
\divv_x\left(D(\Lambda)\nabla_xP+\alpha\sum_{i=1}^I \lambda_i
u_i\Theta(\alpha^2 u_i) E(\Lambda,v) \nabla_x \Lambda \right)- \alpha
\sum_{i=1}^I\lambda_i \mu_i u_i\ .
$$
On one hand, we clearly have
$$
\sum_{i=1}^I\lambda_i (u_i-u_{i-1})=\sum_{i=1}^I\lambda_i u_i -
\sum_{i=0}^{I-1}\lambda_{i+1} u_i = -\lambda_1 \xi(v) v -\alpha
\sum_{i=1}^I\lambda_i^* u_i +\lambda_{I+1} u_I
$$
by \eqref{14} and the definition of $\lambda_i^*$. On the other
hand, if $t\in [0,t_\alpha^*)$, then $\Theta(\alpha^2 u_i(t,x))=1$
for $x\in \bar{\Om}$ and so $\alpha\sum_{i=1}^I\lambda_i u_i
\Theta(\alpha^2 u_i)=P$ in $[0,t_\alpha^*)\times\bar{\Om}$.
Consequently, $P$ solves the same initial-boundary value problem
\eqref{12}, \eqref{15}, \eqref{16} as $\Lambda$ in
$(0,t_\alpha^*)\times\Om$. The uniqueness of classical solutions
to this problem guarantees that \eqref{20} holds true. Thus the
proof of Proposition~\ref{P2.1} is complete.
\end{proof}

\subsection{Uniform Bounds}

The aim of this subsection is to derive some uniform bounds on the
solution obtained in the previous subsection. Given $p>N$ and 
$(u_1^0,\ldots,u_I^0, v^0)\in W_p^1(\Om,\R^{I+1})$ obeying
\eqref{18}, let $(u_1,\ldots,u_I,\Lambda,v)$ denote the classical
solution to \eqref{10}-\eqref{16} provided by Proposition~\ref{P2.1}.

We fix a constant $K_0$ such that
\bear
\nonumber
\int_\Om \left( \alpha \sum_{i=1}^I b_i u_i^0(x) + v^0(x) + \alpha \sum_{i=1}^I \lambda_i \left[ u_i^0(x) (\ln{u_i^0(x)} - 1) +1 \right] \right)\ \rd x & &  \\
\label{wildschwein}
+ b_1 + \lambda_1 +  \left\| \Lambda^0\right\|_\infty + \left\| v^0\right\|_\infty & \le & K_0\,.
 \eear
In the following, $c$ and $c_j$, $j\ge 1$, are generic constants that may differ
from place to place and depend on $\ell$, $B$, $L$, $M$, $\beta$ in \eqref{8}, $\|g\|_\infty$, and $K_0$, but not on $I$, $D$, $E$, $\alpha\in (0,1)$, $d_0$, and $\Xi$ in \eqref{17}. Dependence on additional variables will be indicated explicitly. 

\medskip

We start with an $L_1$-estimate:

\begin{lem}\label{L2.2}
For $T>0$, we have
    \bqn\label{24}
    \int_\Om\left(\alpha \sum_{i=1}^I b_i\, u_i (t,x) +
    v(t,x)\right) \rd x\, \le\, c_1(T)\ ,\quad t\in [0,T]\ .
    \eqn
\end{lem}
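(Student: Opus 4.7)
The plan is to control the weighted total mass
$$
Y(t) := \alpha \sum_{i=1}^I b_i \int_\Om u_i(t,x)\,\rd x + \int_\Om v(t,x)\,\rd x
$$
by deriving a linear Gronwall inequality $Y'(t) \le c\,Y(t)$. The key observation is that the dedifferentiation source $\alpha \sum_i b_i \mu_i u_i$ appearing in \eqref{13} cancels exactly with the corresponding loss term produced when \eqref{10} is multiplied by $\alpha b_i$ and summed over $i$. No control of the potentially large weights $\mu_i b_i$ is therefore needed at this stage.

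Concretely, I would multiply \eqref{10} by $\alpha b_i$, integrate over $\Om$ (the flux integrates to zero thanks to the no-flux boundary condition \eqref{15}), and sum over $i=1,\ldots,I$. A discrete summation by parts, using $u_0 = \xi(v)v$ from \eqref{14} and the definition of $b_i^*$, yields
$$
\sum_{i=1}^I b_i (u_i - u_{i-1}) = b_I\, u_I - b_1\,\xi(v)\,v - \alpha \sum_{i=1}^{I-1} b_i^*\, u_i\,,
$$
so the summed identity reads
$$
\alpha\,\frac{\rd}{\rd t} \sum_{i=1}^I b_i \int_\Om u_i\,\rd x + b_I \int_\Om u_I\,\rd x + \alpha \sum_{i=1}^I b_i \mu_i \int_\Om u_i\,\rd x = b_1 \int_\Om \xi(v)\,v\,\rd x + \alpha \sum_{i=1}^{I-1} b_i^* \int_\Om u_i\,\rd x.
$$
Integrating \eqref{13} over $\Om$ (the Laplacian drops out by $\partial_\nu v = 0$) and adding it to the above, the $\alpha \sum b_i \mu_i u_i$ contributions cancel. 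Discarding the non-positive term $-b_I \int_\Om u_I\,\rd x$, and estimating the remainder via $\xi(v)v \le \|g\|_\infty v$ and $(g-\xi)(v)v \le \|g\|_\infty v$ from $(h_1)$ together with $b_i^* \le B b_i$ from \eqref{8} and $b_1 \le K_0$ from \eqref{wildschwein}, I arrive at $Y'(t) \le c\,Y(t)$ for some constant $c$ depending only on $\|g\|_\infty$, $B$, and $K_0$. Since $Y(0) \le K_0$ by \eqref{wildschwein}, Gronwall's inequality then yields \eqref{24}.

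The only non-routine ingredient is identifying the correct weighted combination: pairing the $u_i$-balance with the weight $b_i$ (rather than $1$ or $\lambda_i$) is precisely what aligns the dedifferentiation source in \eqref{13} with the dedifferentiation loss in \eqref{10} so that they cancel upon addition. Everything else is bookkeeping; in particular, no information on the nonlinear diffusion or drift is required here beyond the fact that their divergences integrate to zero thanks to \eqref{15}.
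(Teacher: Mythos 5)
Your proposal is correct and follows essentially the same route as the paper: multiply \eqref{10} by $\alpha b_i$, sum and integrate with discrete summation by parts using $u_0=\xi(v)v$, add the integrated form of \eqref{13} so that the $\alpha\sum b_i\mu_i u_i$ terms cancel, then bound the remaining terms via $(h_1)$, $b_i^*\le Bb_i$ from \eqref{8}, and $b_1\le K_0$ from \eqref{wildschwein}, and close with Gronwall. The only cosmetic difference is that you drop $b_I\int u_I$ while the paper equivalently drops $b_{I+1}\int u_I$ after extending the $b_i^*$-sum to $i=I$; both are non-negative and harmless.
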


\begin{proof} Multiplying \eqref{10} by $\alpha b_i$, summing with respect to $i$, and integrating with respect to $x$, we obtain from \eqref{14} and \eqref{15} the identity
$$
\frac{\rd }{\rd t} \int_\Om \alpha \sum_{i=1}^I b_i u_i \,\rd
x -b_1\int_\Om\xi(v)v\,\rd
x-\int_\Om\alpha\sum_{i=1}^Ib_i^*\,u_i\, \rd x\,=\,- \int_\Om
\left( \alpha\sum_{i=1}^I b_i\,\mu_i\,u_i + b_{I+1}\ u_I \right)\,\rd x\ .
$$
Integrating \eqref{13} with respect to $x$ and adding the result
to the above identity gives
    \bqnn
    \begin{split}
    \frac{\rd }{\rd t}\int_\Om\left(\alpha \sum_{i=1}^I b_i \,u_i
    + v\right) \rd x\,&\le\,\int_\Om \big(g(v)-(1-b_1)\xi(v)\big)v\, \rd
    x+\int_\Om\alpha\sum_{i=1}^I b_i^* u_i\,\rd x\\
    &\le\, b_1\, \|g\|_\infty \int_\Om v\,\rd
    x+\left(\sup_{1\le j\le I} \frac{b_j^*}{b_j} \right)\  \int_\Om \alpha
    \sum_{i=1}^{I} b_i\, u_i\,\rd x
    \end{split}
    \eqnn
due to $(h_1)$, whence the claim from \eqref{8}.
\end{proof}

We next improve the previous $L_1$-estimate. An appropriate choice of the sequence $(\eta_i)_{i\ge 1}$ considered in the forthcoming lemma will allow us to control the tail of the approximating sequence $(u_i)$ in $L_1((0,T)\times (0,\infty)\times\Om;b(a)\rd t\rd a\rd x)$ later on. Indeed, such a property in turn will guarantee one of the two conditions required for the $L_1$-weak compactness of the approximation (see Section~\ref{S4.3} below).

\begin{lem}\label{highlight}
Let $(\eta_i)\inÊ[0,1]^{I+1}$ be such that $\eta_1=0$ and $\eta_i\le \eta_{i+1}$ for $i=1,\ldots,I$. Then, for $T>0$, 
$$
\int_\Om \alpha \sum_{i=1}^I \eta_i\,b_i \,u_i(t,x)\, \rd x \le e^{Bt}\ \int_\Om \alpha \sum_{i=1}^I \eta_i\,b_i \,u_i^0(x)\, \rd x \,+\, \left| \eta^*\right|_\infty\ c_2(T)\ , \quad t\in [0,T]\, ,
$$
where $\eta_i^*:=(\eta_{i+1}-\eta_i)/\alpha$ for $i=1,\ldots,I$ and $\left| \eta^*\right|_\infty := \max_{1\le i\le I} \left| \eta_i^*\right|$.
\end{lem}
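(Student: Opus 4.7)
The plan is to mimic the proof of Lemma~\ref{L2.2} but with the extra weight $\eta_i$ and to perform a discrete integration by parts (Abel summation) carefully so that the positivity assumptions $\eta_1=0$ and $\eta_i^*\ge 0$ allow us to discard most of the unwanted terms. Concretely, I would multiply equation \eqref{10} by $\alpha\,\eta_i\,b_i$, sum over $i=1,\ldots,I$, and integrate over $\Omega$. The spatial divergence terms vanish owing to the homogeneous Neumann condition \eqref{15}, the dissipative term $-\alpha\sum_i \eta_i b_i\mu_i u_i$ is non-positive and can be dropped, and we are left with
\bqnn
\frac{\rd}{\rd t}\int_\Om \alpha \sum_{i=1}^I \eta_i b_i u_i\,\rd x + \int_\Om \sum_{i=1}^I \eta_i b_i (u_i - u_{i-1})\,\rd x \,\le\, 0\,.
\eqnn

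The key algebraic step is to reorganize the discrete transport sum. Shifting the index in the $u_{i-1}$ term and using $\eta_1 b_1 u_0 = 0$ (since $\eta_1=0$), I would obtain
\bqnn
\sum_{i=1}^I \eta_i b_i (u_i - u_{i-1}) \,=\, \eta_I b_I u_I \,-\, \alpha\sum_{i=1}^{I-1} \eta_i^* b_{i+1} u_i \,-\, \alpha\sum_{i=1}^{I-1} \eta_i b_i^* u_i\,,
\eqnn
via $\eta_{i+1} b_{i+1} - \eta_i b_i = \alpha \eta_i^* b_{i+1} + \alpha \eta_i b_i^*$. Since $\eta_I b_I u_I \ge 0$, it may also be discarded.

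Plugging back and using $b_i^* \le B b_i$ from \eqref{8}, plus $b_{i+1}=b_i+\alpha b_i^*\le (1+B)b_i$, I get the differential inequality
\bqnn
\frac{\rd}{\rd t}\int_\Om \alpha \sum_{i=1}^I \eta_i b_i u_i\,\rd x \,\le\, B \int_\Om \alpha \sum_{i=1}^I \eta_i b_i u_i\,\rd x \,+\, |\eta^*|_\infty\,(1+B)\int_\Om \alpha \sum_{i=1}^I b_i u_i\,\rd x\,.
\eqnn
The last integral is bounded by $c_1(T)$ thanks to Lemma~\ref{L2.2}, so Grönwall's lemma yields the announced estimate with $c_2(T):=(1+B)c_1(T)(e^{BT}-1)/B$.

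I do not expect any genuine obstacle here; the whole point is the Abel summation, and it works cleanly only because both conditions $\eta_1=0$ (killing the boundary term $\eta_1 b_1 u_0 = \eta_1 b_1 \xi(v)v$, which we have no good weight for) and $\eta_i^*\ge 0$ (making the $\alpha\sum \eta_i^* b_{i+1} u_i$ contribution appear with a favorable sign on the left-hand side) are imposed on $\eta$. The only mild care needed is to verify that the reshuffled index boundary term $\eta_I b_I u_I$ has the right sign, which it does because $u_I \ge 0$ by Proposition~\ref{P2.1}.
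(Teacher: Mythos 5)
Your proof is correct and follows essentially the same approach as the paper's: multiply \eqref{10} by $\alpha\eta_i b_i$, sum, integrate, drop the dissipative and divergence terms, perform Abel summation (noting that $\eta_1=0$ kills the $u_0$ boundary term and $\eta_I b_I u_I \ge 0$ can be discarded), then bound via \eqref{8} and Lemma~\ref{L2.2}, and conclude by Gr\"onwall. The only cosmetic difference is that the paper extends the reorganized sum to $i=I$ rather than dropping the $\eta_I b_I u_I$ term explicitly, and keeps $(1+\alpha B)$ where you relax to $(1+B)$ — both inessential.
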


\begin{proof}
Multiplying \eqref{10} by $\alpha \eta_i b_i$, summing with respect to $i$, and integrating with respect to $x$, we obtain from \eqref{14} and \eqref{15} the inequality
$$
\frac{\rd }{\rd t} \int_\Om \alpha \sum_{i=1}^I \eta_i b_i u_i \,\rd
x - \int_\Om\alpha\sum_{i=1}^I \left( b_{i+1}\ \frac{\eta_{i+1}-\eta_i}{\alpha} + \eta_i\ b_i^* \right)\,u_i\, \rd x\,\le\,0\ .
$$
Owing to \eqref{8} we have $b_{i+1}=b_i+\alpha\ b_i^*\le (1+\alpha B)\ b_i$, so that 
\bean
\frac{\rd }{\rd t} \int_\Om \alpha \sum_{i=1}^I \eta_i b_i u_i \,\rd
x & \,\le\, & \int_\Om\alpha\sum_{i=1}^I \left( (1+\alpha B)\ \eta_i^* + \eta_i\ B \right)\,b_i \,u_i\, \rd x \\
& \,\le\, & B\ \int_\Om\alpha\sum_{i=1}^I \eta_i\,b_i \,u_i\, \rd x + (1+ B)\, \left|\eta^*\right|_\infty\, c_1(T)\ ,
\eean
the last inequality being a consequence of \eqref{24}. The claim then follows by integration. 
\end{proof}

We next derive $L_\infty$-estimates on $\Lambda$ and $v$. To this end recall that $t_\alpha^*>0$ was defined in \eqref{19}.

\begin{lem}\label{L2.3}
For $T>0$, we have
    \bqn
    \label{25}
    \|\Lambda (t)\|_\infty + \|v(t)\|_\infty \le c_3(T)\ ,\qquad t\in [0,T]\cap [0,t_\alpha^*)\ .
    \eqn
\end{lem}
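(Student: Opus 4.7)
The plan is to apply the scalar parabolic maximum principle to equations \eqref{12} and \eqref{13} separately, derive coupled integral inequalities for $\|\Lambda(t)\|_\infty$ and $\|v(t)\|_\infty$, and close them by a Gronwall argument. The crucial point throughout will be to control every source term using only the allowed constants $L$, $\beta$, $\|g\|_\infty$, and $K_0$ --- avoiding in particular the constants $\Xi$ and $d_0$ from \eqref{17}. The identity \eqref{20}, valid on $[0,t_\alpha^*)$, is the key structural tool, as it lets us substitute $\alpha \sum_i \lambda_i u_i = \Lambda$ wherever such weighted sums appear.

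Starting from \eqref{12}, the divergence form operator has a non-negative diffusion coefficient $D(\Lambda)+\sum_i \alpha \lambda_i u_i E(\Lambda,v)$, so by the parabolic comparison principle under Neumann boundary conditions it suffices to dominate the source pointwise. I would use $\xi(v) v \le g(v) v \le \|g\|_\infty v$ from $(h_1)$ together with $\lambda_1 \le K_0$ from \eqref{wildschwein}; the inequality $\lambda_i^* \le L \lambda_i$ from \eqref{8} combined with \eqref{20} to get $\alpha \sum_i \lambda_i^* u_i \le L \Lambda$; and the non-positivity of the remaining terms $-\alpha \mu_i \lambda_i u_i$ and $-\lambda_{I+1} u_I$. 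The source of \eqref{12} is then dominated by $K_0 \|g\|_\infty \|v(t)\|_\infty + L \Lambda$, and comparison with the spatially constant function $z(t)$ solving $z'=Lz+K_0\|g\|_\infty\|v(t)\|_\infty$ with $z(0)=\|\Lambda^0\|_\infty\le K_0$ should yield
\[
\|\Lambda(t)\|_\infty \,\le\, K_0\, e^{Lt}\, +\, K_0\, \|g\|_\infty \int_0^t e^{L(t-s)}\, \|v(s)\|_\infty\, \rd s\,.
\]

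The companion bound for $v$ proceeds analogously from \eqref{13}. The reaction term satisfies $(g(v)-\xi(v)) v \le \|g\|_\infty v$ by $(h_1)$, while the inhomogeneity is controlled via $\mu_i b_i \le \beta \lambda_i$ from \eqref{8} together with \eqref{20}, giving $\alpha \sum_i b_i \mu_i u_i \le \beta \Lambda$. The maximum principle applied to the strictly parabolic equation \eqref{13} (with Neumann boundary condition) then yields
\[
\|v(t)\|_\infty \,\le\, K_0\, e^{\|g\|_\infty t}\, +\, \beta \int_0^t e^{\|g\|_\infty(t-s)}\, \|\Lambda(s)\|_\infty\, \rd s\,.
\]
Adding both inequalities produces a linear Gronwall-type inequality for $\|\Lambda(t)\|_\infty+\|v(t)\|_\infty$, from which \eqref{25} follows with a constant $c_3(T)$ depending only on $T$, $L$, $\beta$, $\|g\|_\infty$, and $K_0$, as prescribed. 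The only delicate point is this prohibition on invoking \eqref{17}: it forces us to exploit $\xi \le g$ and the structural bounds \eqref{8} rather than the crude majorization $\xi(v) v \le \Xi$ or the uniform ellipticity constant $d_0$, both of which would give a bound that degenerates in the unregularized limit.
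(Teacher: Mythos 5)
Your proof is correct and follows essentially the same route as the paper's: apply the comparison principle to \eqref{12} and \eqref{13} with the source terms dominated via $(h_1)$, \eqref{8}, and the identity \eqref{20}, then close with Gronwall. The only cosmetic differences are the order in which you bound $\Lambda$ and $v$ and the slightly more explicit tracking of constants via \eqref{wildschwein}.
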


\begin{proof}
We first infer from $(h_1)$, \eqref{8}, \eqref{13}, and \eqref{20} that
$$
\partial_t v - \alpha\ \Delta_x v \le \|g\|_\infty\ v + \beta\ \Lambda \;\;\mbox{ in }\;\; (0,t_\alpha^*)\times\Om
$$
with homogeneous Neumann boundary conditions. The comparison principle readily entails that 
$$
\|v(t)\|_\infty \le \left( \left\| v^0\right\|_\infty + \beta\ \int_0^t \|\Lambda(s)\|_\infty\ \rd s \right)\ e^{\|g\|_\infty t}\,, \quad t\in [0,t_\alpha^*)\,.
$$
Next observe that, by $(h_1)$, \eqref{8}, \eqref{12}, and
\eqref{20}, we have
$$
\partial_t \Lambda - \divv_x\left([D(\Lambda)+\Lambda
    E(\Lambda,v)]\nabla_x\Lambda\right) \,\le\, \lambda_1 \|g\|_\infty v + L\ \Lambda \;\;\mbox{ in }\;\; (0,t_\alpha^*)\times\Om\, .
$$ 
Using the comparison principle once more, we deduce that 
$$
\|\Lambda(t)\|_\infty \le \left( \left\| \Lambda^0\right\|_\infty + \lambda_1 \|g\|_\infty\ \int_0^t \|v(s)\|_\infty\ \rd s \right)\ e^{L t}\,, \quad t\in [0,t_\alpha^*)\,.
$$
Consequently, 
$$
\|\Lambda(t)\|_\infty + \|v(t)\|_\infty \le c(T)\, \left( 1 + \int_0^t \left( \|\Lambda(s)\|_\infty + \|v(s)\|_\infty \right) \rd s \right)\,, \quad t\in [0,T]\cap [0,t_\alpha^*)\, ,
$$
from which the claim then follows.
\end{proof}

As a consequence of the preceding lemma we obtain a lower bound
for $t_\alpha^*$.

\begin{cor}\label{C2.7}
Consider $T>0$. If $\alpha\le \ell/(4c_3(T))$, then $t_\alpha^*\,\ge\, T$.
\end{cor}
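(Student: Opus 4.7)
The plan is to argue by contradiction, combining the identity \eqref{20} for $\Lambda$, the lower bound $\lambda_i\ge \ell$ from \eqref{8}, and the uniform bound on $\Lambda$ from Lemma~\ref{L2.3} to show that, on $[0,T]\cap[0,t_\alpha^*)$, the functions $u_i$ stay safely below the threshold $1/(2\alpha^2)$. This will force $t_\alpha^*\ge T$ by continuity and the definition \eqref{19}.

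More precisely, suppose for contradiction that $t_\alpha^* < T$. Since $t\in[0,t_\alpha^*)$ belongs to both $[0,T]$ and $[0,t_\alpha^*)$, Lemma~\ref{L2.3} applies and gives $\|\Lambda(t)\|_\infty \le c_3(T)$ on this time interval. On the other hand, Proposition~\ref{P2.1} yields the pointwise identity
\bqnn
\Lambda(t,x)\,=\,\alpha\sum_{j=1}^I \lambda_j\,u_j(t,x)\,, \qquad (t,x)\in [0,t_\alpha^*)\times\bar\Om\,,
\eqnn
and, since $u_j\ge 0$ and $\lambda_i\ge \ell$ by \eqref{8}, we deduce for each $i\in\{1,\ldots,I\}$ that
\bqnn
\alpha\,\ell\,u_i(t,x)\,\le\,\alpha\,\lambda_i\,u_i(t,x)\,\le\,\Lambda(t,x)\,\le\,c_3(T)\,,\qquad (t,x)\in[0,t_\alpha^*)\times\bar\Om\,.
\eqnn
Using the hypothesis $\alpha\le \ell/(4c_3(T))$, this rewrites as
\bqnn
\|u_i(t)\|_\infty\,\le\,\frac{c_3(T)}{\alpha\,\ell}\,\le\,\frac{1}{4\,\alpha^2}\,,\qquad t\in[0,t_\alpha^*)\,,\quad i\in\{1,\ldots,I\}\,.
\eqnn

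Since $(u_1,\ldots,u_I)$ is continuous on $[0,\infty)\times\bar\Om$ by Proposition~\ref{P2.1}, we may pass to the limit $t\to t_\alpha^*$ to obtain $\|u_i(t_\alpha^*)\|_\infty\le 1/(4\alpha^2)$ for $i=1,\ldots,I$. By continuity again, there then exists $\delta>0$ such that $\|u_i(t)\|_\infty\le 1/(2\alpha^2)$ for every $t\in[0,t_\alpha^*+\delta)$ and every $i\in\{1,\ldots,I\}$. This contradicts the maximality in the definition \eqref{19} of $t_\alpha^*$. Consequently $t_\alpha^*\ge T$, as claimed.

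No real obstacle is expected; the argument is a direct bootstrap using that \eqref{19} is the only place where the cut-off $\Theta(\alpha^2 u_i)$ might be switched off, and the bound from Lemma~\ref{L2.3} together with $\lambda_i\ge\ell$ keeps us comfortably inside the region where $\Theta\equiv 1$. The smallness condition on $\alpha$ is exactly what is needed to convert the $L_\infty$-bound on $\Lambda$ into an $L_\infty$-bound on the $u_i$'s that is strictly below the activation threshold $1/(2\alpha^2)$.
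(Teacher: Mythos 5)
Your proposal is correct and follows essentially the same route as the paper: contradiction via the identity $\Lambda = \alpha\sum_j \lambda_j u_j$ from \eqref{20}, the lower bound $\lambda_i\ge\ell$ from \eqref{8}, the $L_\infty$-bound on $\Lambda$ from Lemma~\ref{L2.3}, and the smallness of $\alpha$ to push $\|u_i\|_\infty$ down to $1/(4\alpha^2)$, with continuity then contradicting the definition of $t_\alpha^*$. You merely spell out the continuity step (passing to $t_\alpha^*$ and extending by some $\delta>0$) slightly more explicitly than the paper does.
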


\begin{proof}
We consider $T>0$ and $\alpha\le \ell/(4c_3(T))$. Assume for contradiction that $t_\alpha^*<T$. Then it follows from \eqref{8}, \eqref{20}, \eqref{25}, the non-negativity and continuity of $u_i$, and the choice of $\alpha$ that
$$\alpha\, \ell\, u_i(t_\alpha^*,x)\le \Lambda(t_\alpha^*,x)\le c_3(T) \le \frac{\ell}{4\alpha}\ ,\quad x\in\Om\ ,\quad i=1,\ldots,I\ ,
$$
whence $u_i(t_\alpha^*,x)\le 1/(4\alpha^2)$ for $x\in\Om$ and $i=1,\ldots,I$.  Together with the continuity of the $u_i$'s, this contradicts the definition \eqref{19} of $t_\alpha^*$.
\end{proof}

Next we establish some bounds on $(u_i)$ that will guarantee its local weak compactness in $L_1$ and the strong compactness with respect to space and time of its averages with respect to the age variable. As already mentioned, this approach is inspired by the existence proof of renormalized solutions to the Boltzmann equation \cite{DPL89} which makes use of velocity averaging results, see, e.g., \cite{GLPS88}, \cite[Chapter~5]{Pe02}, and the references therein.

\begin{lem}\label{L2.5}
For any $T>0$,
    \bear
    \label{29a}
    \sum_{i=1}^I \alpha\,\lambda_i\int_\Om \phi\big(u_i(t,x)\big)\, \rd x & \,\le\, & c_4(T)\,, \\
        \label{29b}
\int_0^t\int_\Om \left( \alpha \sum_{i=1}^I\lambda_i\,  D(\Lambda)\, \big\vert \nabla_x u_i^{1/2}\big\vert^2 + E(\Lambda,v)\, \big\vert\nabla_x \Lambda\big\vert^2 \right)\, \rd x\rd s & \, \le\, & c_4(T)\ , 
\eear
for $t\in [0,T]\cap [0,t_\alpha^*)$, where $\phi(r):=r(\ln r-1)+1$ for $r>0$ and $\phi(0):=1$. In addition,
\bqn
\label{29c}
\int_0^t\int_\Om \left( \big\vert\nabla_x \zeta_1(\Lambda)\big\vert^2 + \big\vert\nabla_x \zeta_2(\Lambda)\big\vert^2 \right)\, \rd x\rd s \, \le\, c_5(T)\ ,\quad t\in [0,T]\cap [0,t_\alpha^*)\ ,
\eqn
where $\zeta_1$ and $\zeta_2$ are defined in $(h_5)$ and $(h_6)$, respectively. 
\end{lem}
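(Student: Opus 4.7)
The plan is to derive an entropy-type differential inequality by using $\alpha\lambda_i \phi'(u_i) = \alpha\lambda_i \ln u_i$ as multiplier in the equation \eqref{10} for $u_i$, summing over $i\in\{1,\ldots,I\}$, and integrating over $\Om$. Since $u_i$ may vanish, I would first replace $\ln u_i$ by the regularization $\ln(u_i + \ve)$, carry out the computation, and let $\ve \to 0$ at the end; this is routine and I shall not belabor it. Throughout I restrict to $t\in[0,t_\alpha^*)$, on which \eqref{20} yields $\Lambda = \alpha \sum_i \lambda_i u_i$ and $\Theta(\alpha^2 u_i) \equiv 1$ by the definition \eqref{19} of $t_\alpha^*$.

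Writing $\Phi(t) := \sum_{i=1}^I \alpha\lambda_i \int_\Om \phi(u_i)\,\rd x$, the integration by parts together with the Neumann condition \eqref{15} combines the time derivative and diffusion contribution into $\Phi'(t) + 4 \sum_i \alpha\lambda_i \int_\Om D(\Lambda)\,|\nabla_x u_i^{1/2}|^2\,\rd x$. The key structural step concerns the drift: integration by parts against $\ln u_i$ cancels the factor $u_i$ of $u_i E(\Lambda,v) \nabla_x \Lambda$ against $1/u_i$ coming from $\nabla_x \ln u_i$, leaving the clean integrand $\alpha\lambda_i E(\Lambda,v)\,\nabla_x u_i \cdot \nabla_x \Lambda$. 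Summing over $i$ and invoking the identity $\alpha \sum_i \lambda_i \nabla_x u_i = \nabla_x \Lambda$ inherited from \eqref{20}, this sum collapses precisely into the dissipation $\int_\Om E(\Lambda,v)\,|\nabla_x \Lambda|^2\,\rd x$. This collapse --- which is the reason the natural entropy must be weighted precisely by $\lambda_i$ --- is, in my opinion, the main structural insight of the lemma.

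Three source terms remain. The age-difference contribution $-\sum_i \lambda_i \int_\Om \ln u_i\,(u_i - u_{i-1})\,\rd x$ is controlled by the convexity inequality $\phi'(u_i)(u_i - u_{i-1}) \ge \phi(u_i) - \phi(u_{i-1})$ followed by Abel summation and the bound $\lambda_i^* \le L \lambda_i$ from \eqref{8}, which together produce $\lambda_1 \int_\Om \phi(u_0)\,\rd x + L\,\Phi(t)$; the boundary contribution is tamed by $u_0 = \xi(v)v \le \|g\|_\infty c_3(T)$, coming from $(h_1)$ and Lemma~\ref{L2.3}. The sink term $-\sum_i \alpha\lambda_i\mu_i \int_\Om u_i \ln u_i\,\rd x$ is the delicate one, as $\sum_i \alpha\lambda_i\mu_i$ is not bounded independently of $I$. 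I would handle it through the elementary inequality $|r \ln r| \le \phi(r) + 2r$ for $r \ge 0$ (easily checked by inspecting $r \in [0,1]$ and $r \ge 1$) combined with $\mu_i \le M$ from \eqref{8} and $\sum_i \alpha\lambda_i u_i = \Lambda$; together with Lemma~\ref{L2.3} this bounds the sink term by $M\,\Phi(t) + 2M c_3(T)|\Om|$. Assembling yields $\Phi'(t) + 4\sum_i\alpha\lambda_i\int_\Om D(\Lambda)|\nabla_x u_i^{1/2}|^2\,\rd x + \int_\Om E(\Lambda,v)|\nabla_x \Lambda|^2\,\rd x \le (L + M)\Phi(t) + C(T)$, so Gronwall together with $\Phi(0) \le K_0$ gives \eqref{29a}, and integrating the differential inequality over $[0,t]$ delivers \eqref{29b}.

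The gradient bounds \eqref{29c} follow from \eqref{29b}. For $\zeta_1$, I would write $\nabla_x \Lambda = 2\alpha \sum_i \lambda_i u_i^{1/2} \nabla_x u_i^{1/2}$ and apply Cauchy--Schwarz to obtain $|\nabla_x\Lambda|^2 \le 4\alpha\,\Lambda \sum_i \lambda_i |\nabla_x u_i^{1/2}|^2$ (with the convention that both sides vanish when $\Lambda = 0$). Since $|\nabla_x \zeta_1(\Lambda)|^2 = (D(\Lambda)/\Lambda)|\nabla_x\Lambda|^2$, this gives $|\nabla_x \zeta_1(\Lambda)|^2 \le 4\sum_i \alpha\lambda_i D(\Lambda)|\nabla_x u_i^{1/2}|^2$, and \eqref{29b} bounds its space-time integral. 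For $\zeta_2$, the lower bound in $(h_6)$ applied at $R := c_3(T)$ (afforded by Lemma~\ref{L2.3}) gives $(\zeta_2'(\Lambda))^2 \le E(\Lambda,v)/\kappa_2(R)$, whence $|\nabla_x \zeta_2(\Lambda)|^2 \le E(\Lambda,v)|\nabla_x\Lambda|^2/\kappa_2(R)$, and \eqref{29b} again closes the estimate.
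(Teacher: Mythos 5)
Your proof is correct and follows essentially the same route as the paper: multiply by $\alpha\lambda_i\ln u_i$, exploit the collapse $\alpha\sum_i\lambda_i\nabla_x u_i=\nabla_x\Lambda$ from \eqref{20} to produce the $E(\Lambda,v)|\nabla_x\Lambda|^2$ dissipation, handle the age-difference term by convexity of $\phi$ and Abel summation with \eqref{8}, control the sink term via an elementary $r\ln r$ inequality together with $\mu_i\le M$ and the $L_\infty$ bound on $\Lambda$, and close by Gronwall; \eqref{29c} is then deduced exactly as you describe via Cauchy--Schwarz and $(h_5)$, $(h_6)$. The only cosmetic deviations are your explicit $\varepsilon$-regularization of $\ln u_i$ (the paper multiplies directly, relying on positivity of the classical solution) and the use of $|r\ln r|\le\phi(r)+2r$ in place of the paper's $-r\ln r\le\phi(r)+r$.
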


\begin{proof}
Multiplying \eqref{10} by $\alpha\lambda_i\ln u_i$, summing the
resulting equations with respect to $i$, and integrating over
$\Om$, we obtain
    \bqnn
    \begin{split}
    \frac{\rd}{\rd t} \sum_{i=1}^I  \alpha\int_\Om \lambda_i\, \phi(u_i)\,\rd
    x=& -\sum_{i=1}^I\int_\Om \lambda_i\,(u_i-u_{i-1})\ln u_i\, \rd x -
    \alpha\sum_{i=1}^I\int_\Om  \lambda_i\,\mu_i\, u_i\, \ln u_i\, \rd x\\
    & -\sum_{i=1}^I\alpha\,\lambda_i\,\int_\Om \left\{
    D(\Lambda)\frac{\vert\nabla_x u_i\vert^2}{u_i}+E(\Lambda,v)\ \nabla_x u_i\cdot\nabla_x \Lambda\right\} \, \rd x\ .
    \end{split}
    \eqnn
Taking into account that
$$
    -r\ln r\,\le\, \phi(r)+\,r\ ,\quad r\ge 0\ ,
$$
and that
$$
    (u_i-u_{i-1})\ln u_i\ge \phi(u_i)-\phi(u_{i-1})
$$
due to the convexity of $\phi$, we derive from
 \eqref{14} and \eqref{20}
    \bqnn
    \begin{split}
    \frac{\rd}{\rd t} \sum_{i=1}^I  \alpha\,\lambda_i\int_\Om \phi(u_i)\,\rd
    x&\,\le\,  - \sum_{i=1}^I\lambda_i\int_\Om \big(\phi(u_i)-\phi(u_{i-1})\big)\, \rd x
    + \sum_{i=1}^I\int_\Om\alpha\,
    \lambda_i\,\mu_i\,\big(\phi(u_i)+u_i\big)\, \rd x\\
    &\qquad - \int_\Om \left\{ \sum_{i=1}^I\alpha\,\lambda_i
    D(\Lambda)\frac{\vert\nabla_x u_i\vert^2}{u_i}\,+\, E(\Lambda,v)\ \big| \nabla_x \Lambda\big|^2\right\}\, \rd x\\
    &\,\le\, \lambda_1\int_\Om\phi(\xi(v)v)\,\rd x -\lambda_{I+1} \int_\Om \phi(u_{I})\,\rd x \,+
    \,\alpha\sum_{i=1}^I\int_\Om\lambda_i^*\,\phi(u_i)\, \rd x\\
      &\qquad + \sum_{i=1}^I\int_\Om\alpha\,
    \lambda_i\,\mu_i\,\big(\phi(u_i)+u_i\big)\, \rd x\\
&\qquad - \int_\Om \left\{ \sum_{i=1}^I\alpha\,\lambda_i
    D(\Lambda)\frac{\vert\nabla_x u_i\vert^2}{u_i}\,+\, E(\Lambda,v)\ \big| \nabla_x \Lambda\big|^2\right\}\, \rd x\ .
     \end{split}
    \eqnn
Recalling that $\phi\ge 0$, $(h_1)$, \eqref{8}, and \eqref{25} we deduce that
    \bqnn
    \begin{split}
    \frac{\rd}{\rd t} \sum_{i=1}^I  \alpha\,\lambda_i\int_\Om \phi(u_i)\,\rd
    x&\,\le \, c(T) \,+\,\alpha \,(L+M)\sum_{i=1}^I\int_\Om\lambda_i\,\phi(u_i)\, \rd x \\
&\qquad - \int_\Om \left\{ \sum_{i=1}^I\alpha\,\lambda_i
    D(\Lambda)\frac{\vert\nabla_x u_i\vert^2}{u_i}\,+\, E(\Lambda,v)\ \big| \nabla_x \Lambda\big|^2\right\}\, \rd x\ ,
    \end{split}
    \eqnn
from which \eqref{29a} and \eqref{29b} follow. 

Next, by the Cauchy-Schwarz inequality and \eqref{20} we have
$$
\left|\nabla_x \Lambda\right| \le 2\ \Lambda^{1/2}\ \left(\sum_{i=1}^I \alpha \lambda_i \big|\nabla_x u_i^{1/2}\big|^2 \right)^{1/2}\, .
$$
From this, $(h_5)$, and \eqref{29b}, we deduce that 
\bean
\int_0^t \int_\Om \big\vert\nabla_x \zeta_1(\Lambda)\big\vert^2\ \rd x\rd s & = & \int_0^t \int_\Om \frac{D(\Lambda)}{\Lambda}\ \big\vert\nabla_x\Lambda\big\vert^2\ \rd x\rd s \\ 
& \le & 4\ \int_0^t \int_\Om D(\Lambda)\ \sum_{i=1}^I \alpha \lambda_i \big|\nabla_x u_i^{1/2}\big|^2\ \rd x\rd s \le c_4(T)\, ,
\eean
while $(h_6)$, \eqref{25}, and \eqref{29b} imply that
$$
\int_0^t \int_\Om \big\vert\nabla_x \zeta_2(\Lambda)\big\vert^2\ \rd x\rd s \le \int_0^t \int_\Om \frac{E(\Lambda,v)}{\kappa_2(c_3(T))}\ \big\vert\nabla_x\Lambda\big\vert^2\ \rd x\rd s \le \frac{c_4(T)}{\kappa_2(c_3(T))}\, ,
$$
thus completing the proof. \end{proof}

\begin{lem}\label{Leasy}
Consider $(\chi_i)\in\R^{I+1}$ with $\chi_{I+1}=0$ and put $\mcm_\chi := \alpha \sum_{i=1}^I \chi_i u_i$  and $\chi_i^*:=(\chi_{i+1}-\chi_i)/\alpha$ for $i=1,\ldots, I$. Then, for $t\in [0,T]\cap [0, t_\alpha^*)$ with $T>0$, we have
\bqn
\label{30a}
\int_0^t \left( \left\| \nabla_x \mcd_\chi(\mcm_\chi) \right\|_2^2 + \left\| \partial_t \mcd_\chi(\mcm_\chi) \right\|_{(W_{N+1}^1(\Om))'} \right)\ \rd s \le c_6\left( T,|\chi|_\infty,|\chi^*|_\infty, G(T) \right)\ ,
\eqn 
where 
$$
G(T):=\|D\|_{L_\infty(0,c_3(T))}+\|E\|_{L_\infty((0,c_3(T))^2)} \, , 
$$
and $\mcd_\chi\in\mathcal{C}^2(\R)$ is any function satisfying $\mcd_\chi(0)=\mcd_\chi'(0)=0$ and 
$$
0 \le \mcd_\chi''(r) \le D\left( \frac{\ell\ |r|}{|\chi|_\infty} \right)\,, \qquad r\in\R\, .
$$
\end{lem}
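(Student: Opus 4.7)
My plan is to derive a scalar parabolic equation satisfied by $\mcm_\chi$ and then to exploit the pointwise comparison $\mcd_\chi''(\mcm_\chi)\le D(\Lambda)$ together with the $D(\Lambda)$-weighted energy estimate \eqref{29b}. Multiplying \eqref{10} by $\alpha\chi_i$, summing over $i$, and performing a discrete summation by parts on $\sum_i\chi_i(u_i-u_{i-1})$ -- which absorbs the $1/\alpha$ factor from the age transport into $\chi_i^*$, the boundary term at $i=I+1$ vanishing thanks to $\chi_{I+1}=0$ and the one at $i=0$ producing $-\chi_1\xi(v)v$ via \eqref{14} -- one arrives at
$$
\partial_t \mcm_\chi \,=\, \divv_x F_\chi \,+\, R_\chi \quad \text{in}\quad (0,t_\alpha^*)\times\Om\,,\qquad F_\chi\cdot\nu=0\quad\text{on}\quad\partial\Om\,,
$$
with flux $F_\chi := D(\Lambda)\nabla_x\mcm_\chi + \bigl(\alpha\sum_{i=1}^I \chi_i u_i\,\Theta(\alpha^2 u_i)\bigr)E(\Lambda,v)\nabla_x\Lambda$ and source $R_\chi := \chi_1\xi(v)v + \alpha\sum_{i=1}^I(\chi_i^*-\chi_i\mu_i)u_i$. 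By $(h_1)$, \eqref{8}, \eqref{20}, and Lemma~\ref{L2.3}, $R_\chi$ is pointwise bounded by a constant of the allowed type.

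Since $\lambda_i\ge \ell$, the identity \eqref{20} yields the pointwise inequality $\ell|\mcm_\chi|\le |\chi|_\infty\Lambda$; the monotonicity of $D$ then gives $\mcd_\chi''(\mcm_\chi)\le D(\Lambda)$ and, upon integration, $|\mcd_\chi'(\mcm_\chi)|\le (|\chi|_\infty/\ell)\Lambda D(\Lambda)$. Writing $\nabla_x\mcm_\chi = 2\alpha\sum_i\chi_i u_i^{1/2}\nabla_x u_i^{1/2}$ and applying Cauchy--Schwarz with the weights $\alpha\lambda_i$ (using $\chi_i^2/\lambda_i\le |\chi|_\infty^2/\ell$) produces
$$
|\nabla_x\mcm_\chi|^2 \,\le\, \frac{4|\chi|_\infty^2}{\ell^2}\,\Lambda\,\Bigl(\alpha\sum_{i=1}^I \lambda_i\,|\nabla_x u_i^{1/2}|^2\Bigr)\,.
$$
Multiplying by $|\mcd_\chi'(\mcm_\chi)|^2$, absorbing exactly one factor of $D(\Lambda)$ via the pointwise bound above, and controlling the remaining $\Lambda$-factors by $c_3(T)$ leaves an integrand dominated by $D(\Lambda)\alpha\sum_i\lambda_i|\nabla_x u_i^{1/2}|^2$, which is integrable by \eqref{29b}; this settles the first contribution to \eqref{30a}. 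A variant of the same computation, now combined with the $L_\infty$-bound $E(\Lambda,v)\le G(T)$ and \eqref{29b}, shows that both $F_\chi$ and $\mcd_\chi''(\mcm_\chi)\nabla_x\mcm_\chi$ belong to $L_2((0,t)\times\Om)$ with norms controlled by $c_6$.

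For the dual-norm estimate on $\partial_t\mcd_\chi(\mcm_\chi)=\mcd_\chi'(\mcm_\chi)\partial_t\mcm_\chi$, I test the above PDE against $\mcd_\chi'(\mcm_\chi)\psi$ for an arbitrary $\psi\in W_{N+1}^1(\Om)$ and integrate the divergence term by parts using $F_\chi\cdot\nu=0$:
$$
\langle\partial_t\mcd_\chi(\mcm_\chi),\psi\rangle = -\!\int_\Om F_\chi\cdot\bigl[\mcd_\chi''(\mcm_\chi)(\nabla_x\mcm_\chi)\psi + \mcd_\chi'(\mcm_\chi)\nabla_x\psi\bigr]\rd x + \int_\Om R_\chi\mcd_\chi'(\mcm_\chi)\psi\,\rd x\,.
$$
Since $N+1>N$ the Sobolev embedding gives $\|\psi\|_\infty + \|\nabla_x\psi\|_2\le c\|\psi\|_{W_{N+1}^1(\Om)}$ on the bounded set $\Om$. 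Combining this with H\"older's inequality, the $L_2$-bounds on $F_\chi$ and $\mcd_\chi''(\mcm_\chi)\nabla_x\mcm_\chi$ established in the previous paragraph, and the $L_\infty$-bounds on $\mcd_\chi'(\mcm_\chi)$ and $R_\chi$, furnishes the pointwise-in-time estimate $\|\partial_t\mcd_\chi(\mcm_\chi)\|_{(W_{N+1}^1(\Om))'}\le c(\|F_\chi\|_2\|\mcd_\chi''(\mcm_\chi)\nabla_x\mcm_\chi\|_2 + \|F_\chi\|_2 + 1)$, after which a Cauchy--Schwarz in time completes the proof.

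The delicate point throughout is to keep all constants independent of $I$, $\alpha$, $d_0$, and $\Xi$: the $1/\alpha$ in the discrete age-transport is eliminated by the summation by parts, the regularization lower bound $D\ge d_0$ is never used (its role being played by $\mcd_\chi''(\mcm_\chi)\le D(\Lambda)$, which converts the $D(\Lambda)$-weighted energy \eqref{29b} into a genuine estimate on $\nabla_x\mcd_\chi(\mcm_\chi)$), and the sums in $I$ only appear inside the combinations $\alpha\sum_i\lambda_i u_i=\Lambda$ (bounded by Lemma~\ref{L2.3}) or $\alpha\sum_i\lambda_i|\nabla_x u_i^{1/2}|^2$ (integrable by \eqref{29b}).
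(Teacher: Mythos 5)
Your proposal is correct and follows essentially the same route as the paper's proof: derive the scalar equation for $\mcm_\chi$ by multiplying \eqref{10} by $\alpha\chi_i$ and summing (the paper records this equation directly, with the $\Theta$-cutoffs already replaced by $1$ on $[0,t_\alpha^*)$), obtain the pointwise Cauchy--Schwarz bound $|\nabla_x\mcm_\chi|\le \tfrac{2|\chi|_\infty}{\ell}\Lambda^{1/2}(\alpha\sum_i\lambda_i|\nabla_x u_i^{1/2}|^2)^{1/2}$ together with $|\mcd_\chi'(\mcm_\chi)|\le\tfrac{|\chi|_\infty}{\ell}\Lambda D(\Lambda)$, and then test the PDE against $\mcd_\chi'(\mcm_\chi)\psi$ for $\psi\in W_{N+1}^1(\Om)$ to split the dual pairing into exactly the three contributions the paper labels $F_1,F_2,F_3$ (your flux--gradient term, flux--curvature term, and source term). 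The estimates you invoke ($\mcd_\chi''(\mcm_\chi)\le D(\Lambda)$, $\Lambda\le c_3(T)$, $D,E\le G(T)$, the $D$-weighted energy \eqref{29b}, and the Sobolev embedding $W_{N+1}^1(\Om)\hookrightarrow L_\infty(\Om)$) are precisely those used in the paper, and your closing Cauchy--Schwarz in time is equivalent to the paper's direct time-integrated bounds on $F_1,F_2,F_3$.
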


\begin{proof}
First observe that the Cauchy-Schwarz inequality together with \eqref{8} and \eqref{20} give that 
\bean
\left|\nabla_x \mcm_\chi\right|  & \le & 2\ \alpha \sum_{i=1}^I \left|\chi_i\right|\ u_i^{1/2}\ \left|\nabla_x u_i^{1/2} \right| \le \frac{2\ \alpha |\chi|_\infty}{\ell}\ \sum_{i=1}^I \lambda_i\ u_i^{1/2}\ \left|\nabla_x u_i^{1/2} \right|\\
& \le &  \frac{2\ |\chi|_\infty}{\ell}\ \Lambda^{1/2}\ \left( \sum_{i=1}^I \alpha\ \lambda_i\ \left|\nabla_x u_i^{1/2} \right|^2 \right)^{1/2}\, .
\eean
Consequently, by \eqref{25} and \eqref{29b}, 
\bear
\nonumber
\int_0^t \int_\Om D(\Lambda)\ \left|\nabla_x \mcm_\chi\right|^2\ \rd x\rd s & \le & \frac{4\ |\chi|_\infty^2}{\ell^2}\ \int_0^t \|\Lambda(s)\|_\infty\ \int_\Om \sum_{i=1}^I \alpha\ \lambda_i\ D(\Lambda)\ \left|\nabla_x u_i^{1/2} \right|^2\ \rd x\rd s \\
\label{31a}
& \le & c(T)\ |\chi|_\infty^2\,.
\eear
Owing to \eqref{8} we have 
\bqn\label{M}
|\mcm_\chi|\le |\chi|_\infty\ \Lambda/\ell\ ,
\eqn
and the monotonicity of $D$ warrants that
\bqn
\label{31b}
\left|\mcd_\chi'(\mcm_\chi)\right| \le \int_0^{|\mcm_\chi|} D\left( \frac{\ell\ r}{|\chi|_\infty} \right)\ \rd r \le |\mcm_\chi|\ D\left( \frac{\ell\ |\mcm_\chi|}{|\chi|_\infty} \right)\le |\mcm_\chi|\ D(\Lambda)\le \frac{|\chi|_\infty}{\ell}\ \Lambda\ D(\Lambda)\,.
\eqn
We then infer from \eqref{25}, \eqref{31a}, and \eqref{31b} that 
\bean
\int_0^t \int_\Om \left| \nabla_x \mcd_\chi(\mcm_\chi) \right|^2\ \rd x\rd s & \le & \frac{|\chi|_\infty^2}{\ell^2}\ \int_0^t \int_\Om \Lambda^2\ D(\Lambda)^2\ \left|\nabla_x \mcm_\chi\right|^2\ \rd x\rd s \\
& \le & c(T)\ |\chi|_\infty^2 \,,
\eean
whence the first part of \eqref{30a}. 

Next, we infer from \eqref{10} that $\mcm_\chi$ solves 
$$
\partial_t \mcm_\chi = \divv_x\big( D(\Lambda)\ \nabla_x\mcm_\chi + \mcm_\chi\ E(\Lambda,v)\ \nabla_x\Lambda \big) + \mcm_{\chi^*-\chi \mu} + \chi_1\ \xi(v)\ v \;\;\mbox{ in }\;\; (0,t_\alpha^*)\times\Om
$$
with homogeneous Neumann boundary conditions. Consider $\varphi\in W_{N+1}^1(\Om)$. Multiplying the above equation by $\varphi\ \mcd_\chi'(\mcm_\chi)$ and integrating over $\Om$ give
\bqn
\label{32a}
\int_0^t \langle \partial_t \mcd_\chi(\mcm_\chi(s)) , \varphi \rangle_{W_{N+1}^1(\Om)}\ \rd s = F_1 + F_2 + F_3\, ,
\eqn
where
\bean
F_1 & := & \int_0^t \int_\Om \big( D(\Lambda)\ \nabla_x \mcm_\chi + \mcm_\chi\ E(\Lambda,v)\ \nabla_x\Lambda \big)\ \mcd_\chi'(\mcm_\chi)\ \nabla_x\varphi\ \rd x\rd s\,, \\
F_2 & := & \int_0^t \int_\Om \big( D(\Lambda)\ \nabla_x \mcm_\chi + \mcm_\chi\ E(\Lambda,v)\ \nabla_x\Lambda \big)\ \mcd_\chi''(\mcm_\chi)\ \nabla_x\mcm_\chi\ \varphi\ \rd x\rd s\,, \\
F_3 & := & \int_0^t \int_\Om \big( \mcm_{\chi^*-\chi\mu} + \chi_1\ \xi(v)\ v \big)\ \mcd_\chi'(\mcm_\chi)\ \varphi\ \rd x\rd s\,.
\eean
It first follows from \eqref{25}, \eqref{29b}, \eqref{31a}, \eqref{M}, and \eqref{31b} that
\bean
|F_1| & \le & \int_0^t \left\| D(\Lambda)^{1/2}\ \nabla_x \mcm_\chi \right\|_2\ \left( \int_\Om D(\Lambda)\ \mcd_\chi'(\mcm_\chi)^2\ |\nabla_x\varphi|^2 \rd x \right)^{1/2} \ \rd s\\
& & +\ \int_0^t \left\| E(\Lambda,v)^{1/2}\ \nabla_x \Lambda \right\|_2\ \left( \int_\Om E(\Lambda,v)\ \mcm_\chi^2\ \mcd_\chi'(\mcm_\chi)^2\ |\nabla_x\varphi|^2 \rd x \right)^{1/2}\ \rd s \\
& \le & c(T,|\chi|_\infty)\ \|\nabla_x\varphi\|_{N+1}\,.
\eean
Next, \eqref{M}, the assumption on $\mcd_\chi''$, and the monotonicity of $D$ yield
$$
\mcd_\chi''(\mcm_\chi) \le D\left( \frac{\ell \vert\mcm_\chi\vert}{|\chi|_\infty}\right) \le D(\Lambda)\,.
$$
We then similarly infer from \eqref{25}, \eqref{29b}, \eqref{31a}, and the continuous embedding of $W_{N+1}^1(\Om)$ in $L_\infty(\Om)$ that
\bean
|F_2| & \le & \|\varphi\|_\infty\ \int_0^t \int_\Om D(\Lambda)\ \mcd_\chi''(\mcm_\chi)\ |\nabla_x\mcm_\chi|^2\ \rd x \rd s\\
& & +\  \|\varphi\|_\infty\ \int_0^t \left\| E(\Lambda,v)^{1/2}\ \nabla_x \Lambda \right\|_2\ \left( \int_\Om E(\Lambda,v)\ \mcm_\chi^2\ \mcd_\chi''(\mcm_\chi)^2\ |\nabla_x\mcm_\chi|^2 \rd x \right)^{1/2}\ \rd s \\
& \le & c\, \|\varphi\|_{W_{N+1}^1(\Om)}\ \int_0^t \int_\Om D(\Lambda)^2\ |\nabla_x\mcm_\chi|^2\ \rd x \rd s\\
& & +\, c\,\|\varphi\|_{W_{N+1}^1(\Om)}\ \left( \int_0^t \int_\Om E(\Lambda,v)\ \mcm_\chi^2\ D(\Lambda)^2\ |\nabla_x\mcm_\chi|^2 \rd x \rd s \right)^{1/2} \\
& \le & c(T,|\chi|_\infty)\ \|\varphi\|_{W_{N+1}^1(\Om)}\,.
\eean
Finally, we deduce from $(h_1)$, \eqref{8}, \eqref{25}, and \eqref{31b} that
\bean
|F_3| & \le & \int_0^t \int_\Om |\varphi|\ \frac{|\chi|_\infty}{\ell}\ \Lambda\ D(\Lambda)\ \left( |\chi|_\infty\ \|g\|_\infty\ v + \frac{(|\chi^*|_\infty+M\ |\chi|_\infty)}{\ell}\ \Lambda \right)\ \rd x\rd t\\
& \le & c(T,|\chi|_\infty,|\chi^*|_\infty)\ \|\varphi\|_{W_{N+1}^1(\Om)}\,.
\eean
Combining the above three estimates for $F_1$, $F_2$, and $F_3$ with \eqref{32a} leads to the assertion by a duality argument.
\end{proof}

We finish off this section with an estimate on $\Delta_x v$.

\begin{lem}\label{L2.4}
For any $T>0$, we have    
    \bqn\label{28}
    \alpha^2\int_0^t \|\Delta_x v\|_2^2\, \rd s\,\le \,\alpha\,
    \|\nabla_x v^0\|_2^2 +c(T)\ , \qquad t\in [0,T]\cap [0,t_\alpha^*)\ .
    \eqn
\end{lem}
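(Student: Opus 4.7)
The natural approach is a standard $L_2$-energy estimate: I would multiply \eqref{13} by $-\alpha\,\Delta_x v$ and integrate over $\Om$. The homogeneous Neumann condition $\partial_\nu v=0$ (inherited by $\partial_t v$ after time differentiation) kills the boundary term when I integrate $-\int_\Om \partial_t v\,\Delta_x v\,\rd x$ by parts, so this piece becomes $\tfrac{\alpha}{2}\tfrac{\rd}{\rd t}\|\nabla_x v\|_2^2$. This yields the identity
$$
\frac{\alpha}{2}\,\frac{\rd}{\rd t}\,\|\nabla_x v\|_2^2 \,+\, \alpha^2\,\|\Delta_x v\|_2^2 \,=\, -\alpha\int_\Om (g(v)-\xi(v))\,v\,\Delta_x v\,\rd x \,-\, \alpha^2\int_\Om \sum_{i=1}^I b_i\,\mu_i\,u_i\,\Delta_x v\,\rd x\, .
$$

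To close, I would bound the right-hand side by Young's inequality, absorbing half of $\alpha^2\|\Delta_x v\|_2^2$ into the left. The first term then contributes $\|(g(v)-\xi(v))\,v\|_2^2$, which is uniformly controlled on $[0,T]$ by $(h_1)$ and the $L_\infty$-bound on $v$ from Lemma~\ref{L2.3}. The second term contributes $\alpha^2\|\sum_{i=1}^I b_i\,\mu_i\,u_i\|_2^2$, and the key observation here is that the constraint $\mu_i b_i \le \beta\,\lambda_i$ from \eqref{8} gives the pointwise inequality $\alpha\sum_{i=1}^I b_i\,\mu_i\,u_i \le \beta\,\Lambda$; combining with Lemma~\ref{L2.3} thus bounds $\alpha^2\|\sum_i b_i\,\mu_i\,u_i\|_2^2$ by a constant $c(T)|\Om|$ independent of $\alpha$ and $I$.

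Putting these pieces together gives $\tfrac{\alpha}{2}\tfrac{\rd}{\rd t}\|\nabla_x v\|_2^2 + \tfrac{\alpha^2}{2}\|\Delta_x v\|_2^2 \le c(T)$, and \eqref{28} follows at once by integrating over $[0,t]$ and dropping the non-negative term $\tfrac{\alpha}{2}\|\nabla_x v(t)\|_2^2$. There is no real obstacle here; the only point deserving care is the factor $\alpha$ built into the multiplier $-\alpha\Delta_x v$, which is precisely what produces the $\alpha^2$ appearing on the left of \eqref{28} and ensures the bound remains meaningful as $\alpha\to 0$. Everything else reduces to the $L_\infty$-estimates on $v$ and $\Lambda$ previously established, combined with the structural assumption $(h_4)$ which forces the dedifferentiation source $\alpha\sum_{i=1}^I b_i\,\mu_i\,u_i$ to inherit the $L_\infty$-bound on $\Lambda$.
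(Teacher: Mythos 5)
Your proof is correct and follows essentially the same route as the paper: multiply \eqref{13} by $-\alpha\Delta_x v$, integrate by parts to produce $\tfrac{\alpha}{2}\tfrac{\rd}{\rd t}\|\nabla_x v\|_2^2 + \alpha^2\|\Delta_x v\|_2^2$, bound the two source terms via $(h_1)$, \eqref{8}, \eqref{20}, and the $L_\infty$-bounds of Lemma~\ref{L2.3}, then absorb with Young's inequality and integrate in time. The only cosmetic remark is that the boundary term in the integration by parts vanishes simply because $\partial_\nu v=0$; no appeal to $\partial_\nu(\partial_t v)=0$ is needed.
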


\begin{proof}
We multiply \eqref{13} by $-\alpha\Delta_x v$, integrate over $\Om$, and use $(h_1)$, \eqref{8}, and \eqref{25} to obtain
    \bqnn
    \begin{split}
    \frac{\alpha}{2}\frac{\rd}{\rd t}\|\nabla_x v\|_2^2 +\alpha^2
    \|\Delta_x v\|_2^2& \le \alpha \|g\|_\infty\, \|v\|_2\, \|\Delta_x
    v\|_2 + \alpha \beta \|\Lambda\|_2\,\|\Delta_x v\|_2\\ &
    \le\frac{\alpha^2 }{2} \|\Delta_x v\|_2^2 + c(T)
    \end{split}
    \eqnn
in $[0,T]\cap [0,t_\alpha^*)$. Integrating with respect to time yields \eqref{28}.
\end{proof}

\section{Passing to the Limit}

We are now in a position to prove Theorem~\ref{T}. We thus
construct a weak solution to \eqref{1}-\eqref{6} in the sense of
Definition~\ref{D} by using a compactness argument for the
solution to the regularized problem \eqref{10}-\eqref{16}. First
we demonstrate how we set up equations \eqref{10}-\eqref{16}.

\subsection{Approximation}

Suppose hypotheses $(h_1)-(h_6)$. Choose $\alpha \in (0,1)$ and
put $\Ia:=\left[ 1/\alpha^2 \right]$. We then set
$$
D_\alpha(r):= D(r)+\alpha\ ,\quad r\in \R\ ,
$$
and note that $D_\alpha$ satisfies \eqref{17} with $d_0=\alpha$. By classical approximation arguments, we also construct a non-negative function $E_\alpha\in\mathcal{C}^3(\R^2)\cap L_\infty(\R^2)$ such that $E_\alpha(r,s)=E(r,s)$ for $(r,s)\in [0,1/\alpha]^2$ and a non-negative function $\xi_\alpha\in\mathcal{C}^1(\R)\cap L_\infty(\R)$ such that $s\mapsto (1+s)\xi_\alpha(s)$ belongs to $L_\infty(\R)$ and $\xi_\alpha(s)=\xi(s)$ for $s\in [0,1/\alpha]$. 
Furthermore, we set, for $i=1,\ldots,\Ia+1$,
$$
\li:=\frac{1}{\alpha}\int_{(i-1)\alpha}^{i\alpha} \lambda(a)\,\rd
a\ ,\quad \bi:=\frac{1}{\alpha}\int_{(i-1)\alpha}^{i\alpha}
b(a)\,\rd a\ ,\quad
\mi:=\frac{1}{\alpha}\int_{(i-1)\alpha}^{i\alpha} \mu(a)\,\rd a
$$
and, for $i=1,\ldots,\Ia$,
$$
    \lis:=\frac{\lambda_{i+1,\alpha}-\li}{\alpha}\ ,\qquad
    \bis:=\frac{b_{i+1,\alpha}-\bi}{\alpha}\ .
$$
Observe then that $(h_2)-(h_4)$ imply the validity of \eqref{8} with $B=B_0$, $L=L_0$, $\beta=\beta_0(1+B_0)$, and $\ell=\ell_0$.
Indeed, hypothesis $(h_2)$ ensures that
$$
0 \le \bis\,=\,\frac{1}{\alpha}\int_{(i-1)\alpha}^{i\alpha}\frac{
b(a+\alpha)-b(a)}{\alpha}\, \rd a\,\le\, B_0\,\bi\ ,\quad
i=1,\ldots, \Ia\ .
$$
One shows $\lis\le L_0\li$ analogously using $(h_3)$ which also gives $\li\ge\ell_0$.
Finally observe that $(h_4)$ warrants
$$
   \mi\,\bi\,=\,\frac{1}{\alpha^2}\int_{(i-1)\alpha}^{i\alpha}\mu(a)\int_{(i-1)\alpha}^{i\alpha}b(z)\,\rd
    z\rd
    a\,\le\,\frac{\beta_0}{\alpha^2}\int_{(i-1)\alpha}^{i\alpha}\lambda(a)\int_{(i-1)\alpha}^{i\alpha}
    \frac{b(z)}{b(a)}\,\rd z\rd a
$$
and therefore $\mi\, \bi\le \beta_0 (B_0+1)\,\li$ owing to
$$
\frac{b(z)}{b(a)}\,\le\,
\frac{b(i\alpha)}{b\big((i-1)\alpha)\big)}\,\le\, B_0\,\alpha +1\
,\qquad (i-1)\alpha\le a, z\le i\alpha\ ,
$$
since $b$ is non-decreasing. Given $p>N$ and any non-negative
initial values $(u^0,v^0)$ satisfying \eqref{id1} and \eqref{id2},
let $\big(u_{1,\alpha},\ldots,
u_{\Ia,\alpha},\Lambda_\alpha,v_\alpha\big)$ denote 
the classical solution to \eqref{10}-\eqref{16} with $I=\Ia$,
$(\lambda_i,b_i,\mu_i)=(\li,\bi,\mi)$, and where $(D,E,\xi)$ are replaced
by $(D_\alpha,E_\alpha,\xi_\alpha)$ and $u_i^0$ by
$$
u_{i,\alpha}^{0}(x):=\frac{1}{\alpha}\int_{(i-1)\alpha}^{i\alpha}
u^0(a,x)\,\rd a\ ,\qquad x\in\bar{\Om}\ ,\quad i=1,\ldots,\Ia\ .
$$
Note that we may assume without loss of generality that
$\|u_{i,\alpha}^{0}\|_\infty\le 1/4\alpha^2$ by making $\alpha$
smaller if necessary. We first collect some properties of $(u_{i,\alpha}^0,\Lambda_\alpha^0)$.

\begin{lem}\label{L3.0}
For $\alpha>0$ small enough, we have
$$
\int_\Om \alpha \sum_{i=1}^{\Ia} \left( \bi\ u_{i,\alpha}^{0}(x) + \li\ \phi\left( u_{i,\alpha}^{0}(x) \right) \right)\ \rd x + \left\| \Lambda_\alpha^0 \right\|_\infty \le c_7\, ,
$$
where $\phi(r):=r\, (\ln{r}-1)+1$ for $r>0$ and $\phi(0):=1$. 
\end{lem}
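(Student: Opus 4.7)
The plan is to estimate each of the three summands separately, by comparing the piecewise-constant (in age) approximation $u_{i,\alpha}^0$ and the averaged coefficients $\bi$, $\li$ with the corresponding integrals of $u^0$, $b$, $\lambda$ over $((i-1)\alpha, i\alpha)$. The key observation, coming from $(h_2)$ and $(h_3)$, is that $b$ and $\lambda$ have controlled oscillation on intervals of length $\alpha \in (0,1)$: for $a \in ((i-1)\alpha, i\alpha)$, the monotonicity of $b$ and the estimates of $(h_2)$, $(h_3)$ yield
\bqnn
\bi \le (B_0\alpha + 1)\,b(a) \le (B_0+1)\,b(a)\,, \qquad \li \le (L_0\alpha + 1)\,\lambda(a) \le (L_0+1)\,\lambda(a)\,.
\eqnn

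For the first term, writing $\alpha\, \bi\, u_{i,\alpha}^0(x) = \bi \int_{(i-1)\alpha}^{i\alpha} u^0(a,x)\,\rd a$ and using the first of the above pointwise bounds, summation over $i$ telescopes the age-intervals and gives
\bqnn
\int_\Om \alpha\sum_{i=1}^{\Ia} \bi\,u_{i,\alpha}^0(x)\,\rd x \,\le\, (B_0+1)\int_\Om \int_0^\infty b(a)\,u^0(a,x)\,\rd a \rd x\,,
\eqnn
which is finite by \eqref{id1}. For the second term, Jensen's inequality applied to the convex function $\phi$ gives $\phi(u_{i,\alpha}^0(x)) \le \frac{1}{\alpha}\int_{(i-1)\alpha}^{i\alpha} \phi(u^0(a,x))\,\rd a$, and the same age-interval bound on $\li$ yields
\bqnn
\int_\Om \alpha\sum_{i=1}^{\Ia} \li\,\phi(u_{i,\alpha}^0(x))\,\rd x \,\le\, (L_0+1) \int_\Om \int_0^\infty \lambda(a)\,\phi(u^0(a,x))\,\rd a \rd x\,,
\eqnn
which is finite since $\phi(r) \le r|\ln r| + r + 1$ and $\lambda \le \beta_0\, b$ by $(h_4)$, combined with the hypotheses \eqref{id1} and \eqref{id2}.

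The third term is the only one requiring a bit more care because we need an $L_\infty$ bound, not an $L_1$ one. Using once more $\li \le (L_0+1)\lambda(a)$ on $((i-1)\alpha,i\alpha)$ and the definition of $\Lambda_\alpha^0$, I obtain
\bqnn
\Lambda_\alpha^0(x) \,\le\, (L_0+1) \int_0^\infty \lambda(a)\,u^0(a,x)\,\rd a\,.
\eqnn
The pointwise supremum in $x$ is then controlled via the Sobolev embedding $W_p^1(\Om) \hookrightarrow L_\infty(\Om)$ (valid since $p > N$), which gives
\bqnn
\|\Lambda_\alpha^0\|_\infty \,\le\, (L_0+1)\,c \int_0^\infty \lambda(a)\,\|u^0(a,\cdot)\|_{W_p^1(\Om)}\,\rd a\,,
\eqnn
and the right-hand side is finite by \eqref{id1}. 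The three bounds, all independent of $\alpha$, add up to the required constant $c_7$. No step looks truly delicate; the only real content beyond bookkeeping is the use of convexity of $\phi$ for the entropy term and the Sobolev embedding for the $L_\infty$-bound on $\Lambda_\alpha^0$.
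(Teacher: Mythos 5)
Your proof follows the paper's argument almost line for line: the same three-term decomposition, the same comparison of the local averages $\bi$, $\li$, $u_{i,\alpha}^0$ with the corresponding pointwise quantities via $(h_2)$, $(h_3)$, the same use of Jensen's inequality for the $\phi$-term (justified since $\phi$ is convex), and the same Sobolev embedding $W_p^1(\Om)\hookrightarrow L_\infty(\Om)$ for the bound on $\Lambda_\alpha^0$. Your route to the $\lambda$-bound is marginally more economical: you obtain $\li\le (L_0\alpha+1)\,\lambda(a)$ directly from the two-sided estimate in $(h_3)$ for each $a$ in the age-cell, whereas the paper splits $\int_{(i-1)\alpha}^{i\alpha}\lambda(z)\,\rd z$ at $z=a$ and integrates the two pieces. (Small caveat: you invoke ``monotonicity of $b$ \emph{and} the estimates of $(h_2)$, $(h_3)$''; monotonicity is available only for $b$, and for $\lambda$ you are tacitly using both inequalities in $(h_3)$ — correct, but worth stating.)

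There is, however, one point that your argument slides over, and the paper does too. You assert that $\int_\Om\int_0^\infty\lambda(a)\,\phi(u^0)\,\rd a\,\rd x$ is finite because $\phi(r)\le r|\ln r|+r+1$, invoking $(h_4)$, \eqref{id1}, \eqref{id2}. But the $+1$ contributes $|\Om|\int_0^\infty\lambda(a)\,\rd a$, which is $+\infty$ since $(h_3)$ imposes $\lambda\ge\ell_0>0$; \eqref{id1}--\eqref{id2} control $\int\lambda\,u^0$ and $\int\lambda\,|u^0\ln u^0|$ but say nothing about the constant piece of $\phi$. Indeed the quantity $\alpha\sum_{i=1}^{\Ia}\li\,\phi(u^0_{i,\alpha})$ is not bounded uniformly in $\alpha$ in general: whenever $u^0_{i,\alpha}\to 0$ as $i\alpha\to\infty$ (which is forced by $u^0\in L_1(b\,\rd a\,\rd x)$ and $b\to\infty$), one has $\phi(u^0_{i,\alpha})\to\phi(0)=1$, so the tail of the sum behaves like $\int^{1/\alpha}\lambda\,\rd a\to\infty$. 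A repair requires either an extra hypothesis such as $\phi(u^0)\in L_1(\lambda\,\rd a\,\rd x)$ (strictly stronger than \eqref{id2} for $u^0$ decaying at infinity) or an entropy that vanishes at $0$, e.g.\ $r\mapsto r(\ln r)_+$, with a corresponding adaptation of Lemma~\ref{L2.5}. This is not a defect in your reconstruction — the paper's own proof carries the same gap — but the finiteness should not be asserted as a consequence of $(h_4)$, \eqref{id1}, \eqref{id2} alone.
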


\begin{proof}
By $(h_2)$,
\bean
\int_\Om \alpha \sum_{i=1}^{\Ia} \bi\ u_{i,\alpha}^{0}(x)\ \rd x & = & \frac{1}{\alpha}\ \int_\Om \sum_{i=1}^{\Ia} \int_{(i-1)\alpha}^{i\alpha} u^0(a,x)\, \int_{(i-1)\alpha}^{i\alpha} b(z)\ \rd z \rd a \rd x \\
& \le & \frac{1}{\alpha}\ \int_\Om \sum_{i=1}^{\Ia} \int_{(i-1)\alpha}^{i\alpha} b(a)\, u^0(a,x)\, \int_{(i-1)\alpha}^{i\alpha} \frac{b(a+\alpha)}{b(a)}\ \rd z \rd a \rd x \\
& \le & (1+B_0)\ \int_\Om \int_0^\infty b(a)\, u^0(a,x)\ \rd a\rd x\,.
\eean

Next, $(h_3)$, \eqref{id2}, and Jensen's inequality entail that
\bqnn
\begin{split}
\int_\Om \alpha \sum_{i=1}^{\Ia} & \li\ \phi\left( u_{i,\alpha}^{0}(x) \right) \ \rd x \\
& =  \int_\Om \sum_{i=1}^{\Ia} \left( \int_{(i-1)\alpha}^{i\alpha} \lambda(z)\ \rd z \right)\ \phi\left( \frac{1}{\alpha} \int_{(i-1)\alpha}^{i\alpha} u^{0}(a,x)\ \rd a \right) \ \rd x \\
& \le  \frac{1}{\alpha}\ \int_\Om \sum_{i=1}^{\Ia} \int_{(i-1)\alpha}^{i\alpha} \lambda(z)\ \int_{(i-1)\alpha}^{i\alpha} \phi\left( u^{0}(a,x) \right)\ \rd a \rd z\rd x \\
& =  \frac{1}{\alpha}\ \int_\Om \sum_{i=1}^{\Ia} \int_{(i-1)\alpha}^{i\alpha} \phi\left( u^{0}(a,x) \right)\ \int_{(i-1)\alpha}^a \lambda(z)\ \rd z \rd a\rd x \\
& \quad +\ \frac{1}{\alpha}\ \int_\Om \sum_{i=1}^{\Ia} \int_{(i-1)\alpha}^{i\alpha} \phi\left( u^{0}(a,x) \right)\ \int_a^{i\alpha} \lambda(z)\ \rd z \rd a\rd x \\
& \le  \frac{1}{\alpha}\ \int_\Om \sum_{i=1}^{\Ia} \int_{(i-1)\alpha}^{i\alpha} \lambda(a)\, \phi\left( u^{0}(a,x) \right)\, \left(\int_{(i-1)\alpha}^a (1+L_0(a-z))\ \rd z  \right) \rd a\rd x \\
&\quad  +\ \frac{1}{\alpha}\ \int_\Om \sum_{i=1}^{\Ia} \int_{(i-1)\alpha}^{i\alpha} \lambda(a)\, \phi\left( u^{0}(a,x) \right)\,
\left( \int_a^{i\alpha}(1+L_0(z-a))\ \rd z  \right)
 \rd a\rd x \\
& \le  (1+L_0)\ \int_\Om \sum_{i=1}^{\Ia} \int_{(i-1)\alpha}^{i\alpha} \lambda(a)\ \phi\left( u^{0}(a,x) \right)\ \rd a\rd x \\
& \le  (1+L_0)\ \int_\Om \int_0^\infty \lambda(a)\ \phi\left( u^{0}(a,x) \right)\ \rd a\rd x \, .
\end{split}
\eqnn

Finally, as above we deduce 
\bean
0\le \Lambda_\alpha^0(x) & = & \frac{1}{\alpha}\ \sum_{i=1}^{\Ia} \int_{(i-1)\alpha}^{i\alpha} u^0(a,x)\ \left( \int_{(i-1)\alpha}^a \lambda(z)\ \rd z + \int_a^{i\alpha} \lambda(z)\ \rd z  \right)\ \rd a \\
& \le & (1+L_0)\ \int_0^\infty \lambda(a)\ u^0(a,x)\ \rd a\,,
\eean
and the right-hand side of the above inequality belongs to  $L_\infty(\Om)$ as a consequence of \eqref{id1} and the continuous embedding of $W_p^1(\Om)$ in $L_\infty(\Om)$ (recall that $p>N$). 
\end{proof}

Now we introduce
    \begin{align*}
&\lambda_\alpha(a):=\sum_{i=1}^{\Ia} \li {\bf
1}_{((i-1)\alpha,i\alpha]}(a)\ ,& b_\alpha(a):=\sum_{i=1}^{\Ia}
\bi {\bf 1}_{((i-1)\alpha,i\alpha]}(a)\ ,\\ &
\mu_\alpha(a):=\sum_{i=1}^{\Ia} \mi {\bf
1}_{((i-1)\alpha,i\alpha]}(a)\ ,
    \end{align*}
and
$$
u_\alpha(t,a,x):= \sum_{i=1}^{\Ia} u_{i,\alpha}(t,x)\, {\bf
1}_{((i-1)\alpha,i\alpha]}(a)
$$
for $(t,a,x)\in [0,\infty)\times [0,\infty)\times\Om$. Let
$$
    t_\alpha^*:=\sup\left\{t>0\,;\, \max_{1\le i\le I} \sup_{\tau\in [0,t]} \|u_{i,\alpha}(\tau)\|_\infty  \le
    \frac{1}{2\alpha^2}\right\}\,>0\ ,
$$
be defined as in \eqref{19}. We first establish that, as expected, $u_\alpha$ is a weak solution to an approximation of the original problem. 

\begin{lem}\label{L3.1}
If $\varphi\in \mathcal{C}^1\big([0,\infty)\times\bar{\Om}\big)$ is such that
$\mathrm{supp}\, \varphi\subset [0,R]\times\bar{\Om}$ for some
$R>0$ and $\alpha\Ia\ge R$, then
    \bqnn
    \begin{split}
    \frac{\rd}{\rd t}\int_0^\infty\int_\Om \varphi \, u_\alpha \, \rd
    a\,\rd x  =\frac{1}{\alpha}&\int_0^\alpha\int_\Om
    \varphi \,\xi_\alpha(v_\alpha )\, v_\alpha \,\rd x\rd
    a\\
    & +\int_0^\infty\int_\Om
    \left(\frac{\varphi(a+\alpha)-\varphi(a)}{\alpha}-\varphi(a)\mu_\alpha(a)\right)
    u_\alpha (a)\, \rd x\rd a\\
    & -\int_0^\infty\int_\Om
    \big(D_\alpha(\Lambda_\alpha)\nabla_x u_\alpha
    +u_\alpha\, E_\alpha(\Lambda_\alpha , v_\alpha )\ \nabla_x \Lambda_\alpha \big)\cdot \nabla_x\vp\,\rd x\rd a
    \end{split}
    \eqnn
for $t\in (0,t_\alpha^*)$.
\end{lem}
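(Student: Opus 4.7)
The plan is to derive the identity by testing the discretised equation \eqref{10} against an $x$-dependent weight built from $\varphi$ and then performing a discrete integration by parts in the age index.

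First, since $t<t_\alpha^*$, Proposition~\ref{P2.1} and the definition \eqref{19} give $\alpha^2 u_{i,\alpha}(t,x)\le 1/2$ for every $i$ and $(t,x)$, so that $\Theta(\alpha^2 u_{i,\alpha})\equiv 1$ on the interval under consideration and the cut-off $\Theta$ may be dropped from the drift term in \eqref{10}. I would then introduce the age-averaged weight
$$
\Phi_i(x)\,:=\,\int_{(i-1)\alpha}^{i\alpha}\varphi(a,x)\,\rd a\ ,\qquad i=1,\ldots,\Ia+1\ ,
$$
and observe that the support condition $\mathrm{supp}\,\varphi\subset [0,R]\times\bar{\Om}$ combined with $\alpha\Ia\ge R$ forces $\Phi_{\Ia+1}\equiv 0$.

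Second, I multiply \eqref{10} (written for the approximating data $(D_\alpha,E_\alpha,\xi_\alpha)$ and $I=\Ia$) by $\Phi_i$, integrate over $\Om$, and use the Neumann condition \eqref{15} to discard the boundary term when integrating by parts in $x$. Since $\Phi_i$ is independent of $t$, the time derivative part turns into $\rd/\rd t \int_\Om u_{i,\alpha}\Phi_i\,\rd x$. Summing the resulting identities over $i=1,\ldots,\Ia$, the transport contribution reads $\alpha^{-1}\sum_{i=1}^{\Ia}\int_\Om\Phi_i(u_{i,\alpha}-u_{i-1,\alpha})\,\rd x$, which I would reorganize by discrete summation by parts (shifting the index in the second half of the sum) as
$$
-\,\frac{1}{\alpha}\sum_{i=1}^{\Ia}\int_\Om u_{i,\alpha}\,(\Phi_{i+1}-\Phi_i)\,\rd x\,-\,\frac{1}{\alpha}\int_\Om\Phi_1\,u_{0,\alpha}\,\rd x\,,
$$
where the vanishing of the upper endpoint term exploits $\Phi_{\Ia+1}\equiv 0$, and $u_{0,\alpha}=\xi_\alpha(v_\alpha)v_\alpha$ by \eqref{14}.

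Third, I reinterpret each of the resulting sums as an integral with respect to the age variable, using that $u_\alpha(t,\cdot,x)$ is piecewise constant on the partition $\{((i-1)\alpha,i\alpha]\}_{i=1}^{\Ia}$. For instance $\sum_{i=1}^{\Ia}\int_\Om u_{i,\alpha}\Phi_i\,\rd x = \int_0^\infty\int_\Om \varphi\,u_\alpha\,\rd x\,\rd a$, while $\alpha^{-1}(\Phi_{i+1}-\Phi_i)$ paired with $u_{i,\alpha}\mathbf{1}_{((i-1)\alpha,i\alpha]}$ matches the term containing $\alpha^{-1}(\varphi(a+\alpha)-\varphi(a))$. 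The $\mu$-sum becomes $\int_0^\infty\int_\Om\mu_\alpha\,\varphi\,u_\alpha$, the source term $\alpha^{-1}\Phi_1\xi_\alpha(v_\alpha)v_\alpha$ is exactly $\alpha^{-1}\int_0^\alpha\int_\Om\varphi\,\xi_\alpha(v_\alpha)v_\alpha\,\rd x\,\rd a$, and the diffusion and drift sums turn into the expected integrals because $\nabla_x u_\alpha=\nabla_x u_{i,\alpha}$ on $((i-1)\alpha,i\alpha]$.

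No genuine obstacle arises in this proof; it is essentially careful bookkeeping. The only delicate point is the correct handling of the two endpoint indices in the summation by parts: the $i=1$ end produces the age-boundary source $\xi_\alpha(v_\alpha)v_\alpha$ through \eqref{14}, while the $i=\Ia$ end contributes nothing precisely because of the assumption $\alpha\Ia\ge R$ combined with the compact support of $\varphi$, which forces $\Phi_{\Ia+1}\equiv 0$.
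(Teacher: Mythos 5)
Your proposal is correct and follows essentially the same approach as the paper: test the discretised equation \eqref{10} against the age-averaged weight (your $\Phi_i$ equals $\alpha\,\vp_{i,\alpha}$ in the paper's notation), perform the discrete summation by parts, and reinterpret the resulting sums as integrals in $a$ using the piecewise-constant structure of $u_\alpha$, $\mu_\alpha$. Your explicit remark that $t<t_\alpha^*$ forces $\Theta(\alpha^2 u_{i,\alpha})\equiv 1$, so the cut-off may be dropped from the drift term, is a step the paper uses silently, and it is good that you made it explicit.
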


\begin{proof}
Let $\varphi\in \mathcal{C}^1\big([0,\infty)\times\bar{\Om}\big)$ with
$\mathrm{supp}\, \varphi\subset [0,R]\times\bar{\Om}$ and put
$$
\vp_{i,\alpha}(x):=\frac{1}{\alpha}\int_{(i-1)\alpha}^{i\alpha}\vp(a,x)\,\rd
a\ ,\qquad i=1,\ldots,\Ia+1\ ,\quad x\in\Om\ .
$$
We infer from \eqref{10} and \eqref{14} that, for $t\in
(0,t_\alpha^*)$,
\bqnn
\begin{split}
 \frac{\rd}{\rd t}&\int_0^\infty\int_\Om \varphi \, u_\alpha\, \rd
    a\rd x  = \frac{\rd}{\rd t}\sum_{i=1}^{\Ia} \int_\Om \alpha\,\varphi_{i,\alpha} \, u_{i,\alpha}\,\rd x\\
    &=\sum_{i=1}^{\Ia} \int_\Om \alpha\,\varphi_{i,\alpha}\left[-\frac{1}{\alpha}\big(u_{i,\alpha}-u_{i-1,\alpha}\big)-\mu_{i,\alpha}u_{i,\alpha}\right] \rd x\\
&\quad+\ \sum_{i=1}^{\Ia} \int_\Om \alpha\,\varphi_{i,\alpha} \,\divv_x\big(D_\alpha(\Lambda_\alpha)\nabla_x  u_{i,\alpha}+u_{i,\alpha}
    E_\alpha(\Lambda_\alpha,v_\alpha)\, \nabla_x \Lambda_\alpha \big) \rd x\\ 
    & = \int_\Om\vp_{1,\alpha}\, \xi_\alpha(v_\alpha)\,v_\alpha\,\rd
    x+\sum_{i=1}^{\Ia}\int_\Om\left(\vp_{i+1,\alpha}-\vp_{i,\alpha}\right)
    u_{i,\alpha}\,\rd x -\int_\Om\vp_{\Ia+1,\alpha}\,
    u_{\Ia,\alpha}\,\rd x\\
 &   \quad -\ \int_0^\infty\int_\Om\vp\, \mu_\alpha \,u_\alpha\, \rd
    x\rd a
    -\int_0^\infty\int_\Om\big(D_\alpha(\Lambda_\alpha)\nabla_x
    u_\alpha +u_\alpha\nabla_x
    E_\alpha(\Lambda_\alpha,v_\alpha)\nabla_x \Lambda_\alpha\big)\cdot\nabla_x\vp\, \rd x\rd a\ .
\end{split}
\eqnn
Noticing that $\alpha\Ia\ge R$ implies $\vp_{\Ia+1,\alpha}=0$, the
assertion follows.
\end{proof}

\subsection{Compactness estimates}

Our aim is then to pass to the limit as $\alpha\to 0$ in the identity stated in the previous lemma. We thus need to provide some compactness for $(u_\alpha)$, $(\Lambda_\alpha)$, and $(v_\alpha)$, a first step being the derivation of suitable estimates. 

\begin{lem}\label{L3.3}
Let $T>0$. Then, for $\alpha$ small enough (depending on $T$), $A\ge 4$, and $t\in [0,T]$, the following estimates are valid:
    \begin{align}
    \int_0^\infty\int_\Om \left[ b(a)\, u_\alpha(t,a,x) + \lambda(a)\, \phi\big(u_\alpha(t,a,x)\big) \right] \rd x\rd a &\le c_1(T)+c_4(T)\ ,\label{30}\\
    \|\Lambda_\alpha(t)\|_\infty\,+\,\|v_\alpha(t)\|_\infty &\le c_3(T)\ ,\label{31}\\
    \int_0^t \int_\Om \left( \left|\nabla_x \zeta_1(\Lambda_\alpha) \right|^2\ + \left\vert \nabla_x \zeta_2(\Lambda_\alpha) \right\vert^2 + \alpha\  \left\vert \nabla_x \Lambda_\alpha \right\vert^2 \right)\, \rd x\rd s&\le c_8(T)\ , 
    \label{32}
        \end{align}
        \bqn
\int_\Om \int_A^\infty b(a)\, u_\alpha(t,a,x)\, \rd a\rd x \le c_8(T)\ \left( \int_\Om \int_{A/4}^\infty b(a) \,u^0(a,x)\, \rd a \rd x \,+\, \frac{1}{A} \right)\ .
\label{34}
\eqn
In addition, for any $\chi\in \mathcal{C}^1([0,\infty))$ with compact support, the sequence $(\mcm_{\chi,\alpha})$ defined by 
$$
\mcm_{\chi,\alpha}(t,x) := \int_0^\infty \chi(a)\ u_\alpha(t,a,x)\ \rd a\,, \qquad (t,x)\in (0,t_\alpha^*)\times\Om\,,
$$
is such that 
\bqn
\label{33}
\left\| \mcm_{\chi,\alpha}(t)\right\|_\infty + \int_0^t \left( \left\| \nabla_x \mcd_\chi(\mcm_{\chi,\alpha}) \right\|_2^2 + \left\| \partial_t \mcd_\chi(\mcm_{\chi,\alpha}) \right\|_{(W_{N+1}^1(\Om))'} \right)\ \rd s \le c_9(T,\|\chi\|_{W_\infty^1(0,\infty)})
\eqn 
for $t\in [0,T]\cap [0,t_\alpha^*)$, where the function $\mcd_\chi\in\mathcal{C}^2(\R)$ is defined by $\mcd_\chi(0)=\mcd_\chi'(0):=0$ and 
$$
\mcd_\chi''(r) := D\left( \frac{\ell\ |r|}{\|\chi\|_\infty} \right) \le D_\alpha\left( \frac{\ell\ |r|}{\|\chi\|_\infty} \right)\,, \qquad r\in\R\, .
$$
\end{lem}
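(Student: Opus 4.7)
The plan is to translate the discrete bounds from Section~\ref{sect2} into the continuous notation, with one genuinely non-trivial ingredient hidden in the tail estimate \eqref{34}. First, Corollary~\ref{C2.7} (applied with $\ell = \ell_0$) ensures $t_\alpha^* \ge T$ as soon as $\alpha \le \ell_0/(4 c_3(T))$, so every estimate may be derived on the full interval $[0,T]$; in particular, \eqref{31} is immediate from Lemma~\ref{L2.3}.

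For \eqref{30}, the piecewise-constancy of $u_\alpha$ in age together with $\alpha\,\bi = \int_{(i-1)\alpha}^{i\alpha} b(z)\,\rd z$ and $\alpha\,\li = \int_{(i-1)\alpha}^{i\alpha} \lambda(z)\,\rd z$ gives the exact identities $\int_0^\infty b(a)\,u_\alpha\,\rd a = \alpha \sum_i \bi\,u_{i,\alpha}$ and $\int_0^\infty \lambda(a)\,\phi(u_\alpha)\,\rd a = \alpha \sum_i \li\,\phi(u_{i,\alpha})$, so that integration over $\Omega$ combined with \eqref{24} and \eqref{29a} yields \eqref{30}. For \eqref{32}, I would apply \eqref{29b}--\eqref{29c} of Lemma~\ref{L2.5} to the approximate system (where $D, E$ are replaced by $D_\alpha, E_\alpha$). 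Since $D_\alpha \ge D$, a simple algebra gives $\zeta_1'(r) \le \zeta_{1,\alpha}'(r)$ pointwise and hence $|\nabla_x \zeta_1(\Lambda_\alpha)| \le |\nabla_x \zeta_{1,\alpha}(\Lambda_\alpha)|$; meanwhile \eqref{31} gives $\Lambda_\alpha, v_\alpha \le c_3(T) \le 1/\alpha$ for $\alpha$ small, so $E_\alpha(\Lambda_\alpha, v_\alpha) = E(\Lambda_\alpha, v_\alpha)$ and the $\zeta_2$-bound transfers verbatim. For the $\alpha\,|\nabla_x \Lambda_\alpha|^2$ term, Cauchy--Schwarz yields $|\nabla_x \Lambda_\alpha|^2 \le 4 \Lambda_\alpha\,\alpha \sum_i \li\,|\nabla_x u_{i,\alpha}^{1/2}|^2$, which combined with $D_\alpha \ge \alpha$, \eqref{31}, and \eqref{29b} closes this estimate.

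The main obstacle is \eqref{34}. I would invoke Lemma~\ref{highlight} with $\eta_{i,\alpha} := \tilde\eta(i\alpha)$, where $\tilde\eta: [0,\infty) \to [0,1]$ is a non-decreasing Lipschitz cutoff vanishing on $[0, A/4]$, equal to $1$ on $[A, \infty)$, and affine on $[A/4, A]$, so that $|\tilde\eta'|_\infty \le 4/(3A)$. For $\alpha$ small one checks $\eta_{1,\alpha} = \tilde\eta(\alpha) = 0$ (using $A\ge 4$), $(\eta_{i,\alpha})_i$ is non-decreasing, and $|\eta^*|_\infty \le 4/(3A)$. Since $\tilde\eta$ is non-decreasing, $\eta_{i,\alpha} = \tilde\eta(i\alpha) \ge \tilde\eta(a) \ge {\bf 1}_{[A,\infty)}(a)$ for $a \in ((i-1)\alpha, i\alpha]$, so the left-hand side of Lemma~\ref{highlight} controls $\int_\Omega \int_A^\infty b(a)\,u_\alpha\,\rd a\, \rd x$. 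The initial-data term on the right-hand side is bounded by $(1+B_0) \int_\Omega \int_{A/4}^\infty b(a)\,u^0(a,x)\,\rd a\, \rd x$ via Fubini on each cell together with $b(z) \le (1+B_0\alpha)\,b(a)$ from $(h_2)$ (a modest enlargement of the support of $\tilde\eta$ absorbs the $O(\alpha)$ discrepancy at the boundary cell). Combining these delivers \eqref{34}.

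Finally, for \eqref{33} I would set $\chi_{i,\alpha} := (1/\alpha) \int_{(i-1)\alpha}^{i\alpha} \chi(a)\,\rd a$; then $\mcm_{\chi,\alpha} = \alpha \sum_i \chi_{i,\alpha}\,u_{i,\alpha}$ coincides with $\mcm_\chi$ in the notation of Lemma~\ref{Leasy}, with $|\chi_{\cdot,\alpha}|_\infty \le \|\chi\|_\infty$, $|\chi_{\cdot,\alpha}^*|_\infty \le \|\chi'\|_\infty$, and $\chi_{\Ia+1,\alpha} = 0$ for $\alpha$ small past the support of $\chi$. The $L_\infty$-bound $|\mcm_{\chi,\alpha}| \le (\|\chi\|_\infty/\ell_0)\,\Lambda_\alpha$ follows from $\li \ge \ell_0$ and \eqref{31}, while the gradient and time-derivative estimates are a direct application of Lemma~\ref{Leasy}; the hypothesis $\mcd_\chi''(r) \le D_\alpha(\ell_0 |r|/\|\chi\|_\infty)$ is built into the definition of $\mcd_\chi$ since $D \le D_\alpha$, and the dependence of the constant on $G(T)$ is absorbed into $c_9$ because $\|D_\alpha\|_{L_\infty(0,c_3(T))} \le G(T) + 1$ and $E_\alpha = E$ on $[0,c_3(T)]^2$. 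Everything besides the cutoff bookkeeping for \eqref{34} is essentially a translation of the estimates from Section~\ref{sect2}.
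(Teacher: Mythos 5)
Your proposal is correct and follows essentially the same route as the paper: translate the uniform bounds of Section~\ref{sect2} via Corollary~\ref{C2.7} and the exact identities linking $u_\alpha$, $\Lambda_\alpha$, $b_\alpha$, $\lambda_\alpha$ to the discrete quantities, use Lemma~\ref{highlight} with a cutoff sequence supported near $[A,\infty)$ for the tail bound \eqref{34}, and use Lemma~\ref{Leasy} with the cell averages $\chi_{i,\alpha}$ for \eqref{33}. The only (cosmetic) differences are that the paper fixes a smooth cutoff $\eta(\cdot/A)$ rather than your piecewise-affine one, tracks the initial-datum support through $(A-2)/2 \ge A/4$ rather than ``enlarging the support'' as you suggest, and derives the $\alpha |\nabla_x\Lambda_\alpha|^2$ bound in \eqref{32} from $\zeta_{1,\alpha}'(\Lambda_\alpha)\ge \alpha^{1/2}/\Lambda_\alpha^{1/2}\ge \alpha^{1/2}/c_3(T)^{1/2}$ rather than from your Cauchy--Schwarz plus $D_\alpha\ge\alpha$ argument; both are valid. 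One small thing you left implicit and the paper makes explicit at the start: the constants $c_j(T)$ from Section~\ref{sect2} are only $\alpha$-independent because the constant $K_0$ in \eqref{wildschwein} can be chosen independently of $\alpha$, which is exactly what Lemma~\ref{L3.0} together with $(h_4)$ provides; without that remark the translation is not quite complete.
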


\begin{proof}
Owing to Lemma \ref{L3.0} and assumption $(h_4)$, the condition \eqref{wildschwein} is fulfilled with
$$
K_0\,:=\, c_7\,+\, (1+\vert\Om\vert)\, \| v^0\|_\infty\, +\, (1+\beta_0)\, b(2)\ ,
$$
which clearly does not depend on the approximation parameter $\alpha$.

Let $T>0$. According to Corollary~\ref{C2.7}, we may choose $\alpha$ small enough (depending on $T$) such that $t_\alpha^*>T$. We can also assume that $\alpha$ satisfies $\alpha<1/c_3(T)$, the constant $c_3(T)$ stemming from in Lemma~\ref{L2.3}. 

Observe first that \eqref{30} and \eqref{31} are immediate consequences of Lemmata~\ref{L2.2}, \ref{L2.3}, and \ref{L2.5}. A useful consequence of \eqref{31} and the choice $\alpha<1/c_3(T)$ is that $\xi_\alpha(v_\alpha)=\xi(v_\alpha)$ and $E_\alpha(\Lambda_\alpha,v_\alpha)=E(\Lambda_\alpha,v_\alpha)$. We then infer from \eqref{29c} that 
$$
\int_0^t\int_\Om \left( \big\vert\nabla_x \zeta_{1,\alpha}(\Lambda_\alpha)\big\vert^2 + \big\vert\nabla_x \zeta_2(\Lambda_\alpha)\big\vert^2 \right)\, \rd x\rd s \, \le\, c_5(T)\ ,\quad t\in [0,T]\, ,
$$
with $\zeta_{1,\alpha}'(r):=((D(r)+\alpha)/r)^{1/2}$ and $\zeta_{1,\alpha}(0) := 0$. Clearly, $\zeta_{1,\alpha}'(\Lambda_\alpha)\ge \zeta_1'(\Lambda_\alpha)$ and also
$$
\zeta_{1,\alpha}'(\Lambda_\alpha) \ge \frac{\alpha^{1/2}}{\Lambda_\alpha^{1/2}} \ge \frac{\alpha^{1/2}}{c_3(T)^{1/2}}
$$
by \eqref{31}. Collecting the above information allows us to conclude that \eqref{32} holds true. Also, introducing
$$
\chi_i:=\frac{1}{\alpha}\ \int_{(i-1)\alpha}^{i\alpha} \chi(a)\ \rd a \quad\mbox{ and }\quad \chi_i^*:= \frac{\chi_{i+1}-\chi_i}{\alpha}\,, \quad i\ge 1\,,
$$
for $\chi\in \mathcal{C}^1([0,\infty))$ with compact support, we readily see that $|\chi|_\infty+|\chi^*|_\infty\le \|\chi\|_{W_\infty^1(0,\infty)}$ and \eqref{33} follows at once from Lemma~\ref{Leasy}, \eqref{31}, and the inequality $\mcm_{\chi,\alpha}\le \|\chi\|_\infty\Lambda_\alpha/\ell$. 

Finally, let $\eta\in \mathcal{C}^\infty(\R)$ be a fixed non-decreasing function such that $\eta(a)=0$ for $a\le 1/2$ and $\eta(a)=1$ for $a\ge 1$. For $A\ge 4$ and $i\in\N\setminus\{0\}$, we put $\eta_i:=\eta(i\alpha/A)$. Then $\eta_1=0$ and $(\eta_i)_{i\ge 1}$ is clearly a non-decreasing sequence with $0\le \eta_i^*:=(\eta_{i+1}-\eta_i)/\alpha\le \|\eta'\|_\infty/A$ for $i\ge 1$. We then infer from Lemma~\ref{highlight} that, for $t\in [0,T]$, 
$$
\int_\Om \alpha \sum_{i=1}^{\Ia} \eta_i\,\bi \,u_{i,\alpha}(t,x)\, \rd x \le e^{B_0t}\ \int_\Om \alpha \sum_{i=1}^{\Ia} \eta_i\,\bi \,u_{i,\alpha}^0(x)\, \rd x \,+\, \frac{\|\eta'\|_\infty}{A}\ c_2(T)\ . 
$$
The properties of $b_\alpha$ and $\eta$ imply that
\bean
\int_\Om \int_A^\infty b(a)\, u_\alpha(t,a,x)\, \rd a\rd x & \le & \int_\Om \int_A^\infty b_\alpha(a)\, u_\alpha(t,a,x)\, \rd a\rd x \\
& \le & \int_\Om \alpha \sum_{i=1}^{\Ia} \eta_i\,\bi \,u_{i,\alpha}(t,x)\, \rd x \\
& \le & e^{B_0t}\ \int_\Om \int_{(A-2)/2}^\infty b_\alpha(a) \,u^0(a,x)\, \rd a \rd x \,+\, \frac{\|\eta'\|_\infty}{A}\ c_2(T)\ , 
\eean
whence \eqref{34} by $(h_2)$, the latter guaranteeing that $b_\alpha(a)\le (1+B_0)b(a)$ for $a>0$.
\end{proof}

\begin{rem}
Owing to the superlinearity of $\phi$ at infinity, the estimate \eqref{30} warrants the weak compactness of $(u_\alpha)$ in $L_1((0,T)\times (0,A)\times \Om)$ for $T>0$ and $A>0$ and strongly relies on the assumed positivity of $\lambda$. Therefore, if $\lambda$ would vanish on some interval $(0,a_0)$ with $a_0>0$ (as, e.g., in \cite{Ayati1, Ayati2, EsipovShapiro, MMNP}), the restriction of $(u_\alpha)$ to the set $(0,T)\times (0,a_0)\times \Om$ is only weakly-$*$ compact in the space of bounded measures. In that case the passage to the limit performed in the next section might be more delicate.
\end{rem}

\subsection{Proof of Theorem~\ref{T}}\label{S4.3}

\begin{proof} We fix $T>0$. Due to the positive lower bound on $\lambda_\alpha$, \eqref{30}, and \eqref{34}, we may apply the Dunford-Pettis theorem \cite[IV.8]{DunfordSchwartz} to conclude that there are a sequence $(\alpha_k)_{k\ge 1}$ with $\alpha_k\to 0$ and a non-negative function $u\in L_1((0,T)\times (0,\infty)\times \Om; b(a) \rd t \rd a \rd x)$  such that
\bqn
\label{39}
u_{\alpha_k} \rightharpoonup u \;\;\mbox{ in }\;\; L_1((0,T)\times (0,\infty)\times \Om; b(a) \rd t \rd a \rd x)\,.
\eqn
We may also assume that, for each $k\ge 1$, $\alpha_k$ is small enough such that $t_{\alpha_k}^*\ge T$ and $c_3(T)<1/\alpha_k$. Next, given any $\chi\in\mathcal{C}^1([0,\infty))$ with compact support, we readily deduce from \eqref{39} and the positivity and unboundedness of $b$ that 
\bqn
\label{40}
\mcm_{\chi,\alpha_k} \rightharpoonup \mcm_\chi \;\;\mbox{ in }\;\; L_1((0,T)\times\Om)\,,
\eqn
where $\mcm_{\chi,\alpha_k}$ is defined in Lemma~\ref{L3.3} and 
$$
\mcm_\chi(t,x) := \int_0^\infty \chi(a)\ u(t,a,x)\ \rd a\,, \quad (t,x)\in (0,T)\times\Om\,.
$$
Furthermore, owing to \eqref{33}, we may apply \cite[Corollary~4]{Si87} to conclude that $\left( \mcd_\chi\left(\mcm_{\chi,\alpha_k}\right) \right)_k$ is relatively (strongly) compact in $L_2((0,T)\times\Om)$, hence converges also a.e. (after a possible extraction of a further subsequence). This property, the strict monotonicity of $\mcd_\chi$, and \eqref{33} then imply that
\bqn
\label{41}
\mcm_{\chi,\alpha_k} \longrightarrow \mcm_\chi \;\;\mbox{ in }\;\; L_q((0,T)\times\Om) \;\;\mbox{ for any }\;\; q\in [1,\infty)\,.
\eqn

We next claim that 
\bear
\label{42}
\Lambda_{\alpha_k} &\longrightarrow& \Lambda:=\mcm_\lambda \;\;\mbox{ in }\;\; L_q((0,T)\times\Om) \;\;\mbox{ and a.e. in }\;\; (0,T)\times\Om\,, \\
\label{43}
\mcm_{b_{\alpha_k} \mu_{\alpha_k},\alpha_k} &\longrightarrow& \mcm_{b \mu} \;\;\mbox{ in }\;\; L_q((0,T)\times\Om)\,,
\eear
for any $q\in [1,\infty)$. Indeed, let $\vartheta\in \mathcal{C}^\infty(\R)$ be a smooth and non-increasing cut-off function satisfying $\vartheta(a)=1$ if $a\le 1/2$ and $\vartheta(a)=0$ if $a\ge 1$. For $A\ge 1$ and $a\ge 0$, we put $\vartheta_A(a):=\vartheta(a/2A)$. We infer from $(h_4)$ and \eqref{34} that 
\bean
\int_0^T \int_\Om \left| \Lambda_{\alpha_k} - \Lambda\right|\ \rd x\rd t & = & \int_0^T \int_\Om \left| \int_0^\infty \lambda(a)\ \left( u_{\alpha_k} - u \right)\ \rd a \right|\ \rd x \rd t \\
& \le & \int_0^T \int_\Om \left| \int_0^\infty \lambda(a)\ \vartheta_A(a)\ \left( u_{\alpha_k} - u \right)\ \rd a \right|\ \rd x \rd t \\
& & +\ \beta_0\ \int_0^T \int_\Om \int_A^\infty b(a)\ \left( u_{\alpha_k} + u \right)\ \rd a\rd x \rd t\\
& \le & \int_0^T \int_\Om \left| \mcm_{\lambda \vartheta_A,\alpha_k} - \mcm_{\lambda \vartheta_A} \right|\ \rd x \rd t \\
& & +\ c(T)\ \left( \int_0^T \int_\Om \int_{A/4}^\infty b(a)\ \left( u^0 + u \right)\ \rd a\rd x \rd t + \frac{1}{A} \right)\,,
\eean
whence
$$
\limsup_{k\to\infty} \int_0^T \int_\Om \left| \Lambda_{\alpha_k} - \Lambda\right|\ \rd x\rd t \le c(T)\ \left( \int_0^T \int_\Om \int_{A/4}^\infty b(a)\ \left( u^0 + u \right)\ \rd a\rd x \rd t + \frac{1}{A} \right)
$$
by \eqref{41}. Letting $A\to\infty$ completes the proof of \eqref{42} for $q=1$. The extension to $q\in (1,\infty)$ next follows from \eqref{31} by interpolation. The proof of \eqref{43} is similar and uses additionally $(h_2)$. 

A further consequence of \eqref{41} is that, for any $\varphi\in\mathcal{C}^1([0,T)\times[0,\infty)\times\bar{\Om})$ with compact support and $q\in [1,\infty)$, we have 
\bqn
\label{41b}
\mcm_{\varphi,\alpha_k} \longrightarrow \mcm_\varphi \;\;\mbox{ in }\;\; L_q((0,T)\times\Om) 
\eqn
with the notations
$$
\mcm_{\varphi,\alpha_k}(t,x) := \int_0^\infty \varphi(t,a,x)\ u_{\alpha_k}(t,a,x)\ \rd a \;\;\mbox{ and }\;\; \mcm_\varphi(t,x) := \int_0^\infty \varphi(t,a,x)\ u(t,a,x)\ \rd a
$$
for $(t,x)\in (0,T)\times\Om$. Indeed, we argue as in \cite[Section~IV]{DPL89} and first note that \eqref{41b} readily follows from \eqref{41} if 
there are an integer $J\ge 1$ and functions $(\psi_j)_{1\le j \le J}$ in $\mathcal{C}^1([0,T)\times\bar{\Om})$ and $(\chi_j)_{1\le j \le J}$ in $\mathcal{C}^1([0,\infty))$ with compact support such that 
\bqn
\label{41c}
\varphi(t,a,x)=\sum_{j=1}^J \psi_j(t,x)\ \chi_j(a) \;\;\mbox{ for }\;\; (t,a,x)\in [0,T)\times[0,\infty)\times\bar{\Om}\,.
\eqn
We next use the classical fact that, given $\varphi\in\mathcal{C}^1([0,T)\times[0,\infty)\times\bar{\Om})$ with compact support, there is a sequence of functions $(\varphi_n)_n$ which is bounded in $L_\infty((0,T)\times(0,\infty)\times\Om)$ and converges a.e. towards $\varphi$, each function $\varphi_n$ being of the form \eqref{41c}. The claim \eqref{41b} then follows with the help of the convergence \eqref{39}.

We next turn to the (strong) compactness of $(v_{\alpha_k})_k$ and let $v$ denote the solution to 
\bear
\label{45}
\partial_t v(t,x) & = & (g-\xi)(v(t,x))\ v(t,x) + \mcm_{b\mu}(t,x)\,, \quad (t,x)\in (0,T)\times\Om\, , \\
\label{46}
v(0,x) & = & v^0(x)\,, \quad x\in\Om\,.
\eear
Owing to $(h_1)$, $(h_4)$, and the non-negativity of $u$ and $v^0$, we have $v\ge 0$ and $\partial_t v \le \|g\|_\infty\ v + \beta_0\ \Lambda$. Since $\Lambda$ belongs to $L_\infty((0,T)\times\Om)$ by \eqref{31} and \eqref{42}, so does $v$ by the previous differential inequality. It next follows from $(h_1)$, \eqref{13}, \eqref{31}, and \eqref{45} that 
\bear
\nonumber
\frac{1}{2}\ \frac{\rd}{\rd t} \left\| v_{\alpha_k} - v \right\|_2^2 & \le & - \alpha_k\ \left\| \nabla v_{\alpha_k} \right\|_2^2 - \alpha_k\ \int_\Om \Delta v_{\alpha_k}\ v\ \rd x + \left\| v_{\alpha_k} - v \right\|_2\ \left\| \mcm_{b_{\alpha_k} \mu_{\alpha_k},\alpha_k} - \mcm_{b\mu} \right\|_2
 \\
 \nonumber
& & +\ \int_\Om \left| (g-\xi)(v_{\alpha_k})\ v_{\alpha_k} - (g-\xi)(v)\ v\right|\ \left| v_{\alpha_k} - v \right|\ \rd x \\
\nonumber
& \le & - \alpha_k\ \int_\Om \Delta v_{\alpha_k}\ v\ \rd x + \left\| \mcm_{b_{\alpha_k} \mu_{\alpha_k},\alpha_k} - \mcm_{b\mu} \right\|_2^2 \\
\label{47}
& & +\ \left( 1+ \|g\|_\infty + c_3(T)\ \|g'-\xi'\|_{L_\infty(0,c_3(T))} \right)\ \left\| v_{\alpha_k} - v \right\|_2^2\,.
\eear
Recalling \eqref{28}, we realize that $(\alpha_k \Delta v_{\alpha_k})_k$ is weakly relatively compact in $L_2((0,T)\times\Om)$. Since $(\alpha_k v_{\alpha_k})_k$ converges to zero in $L_2((0,T)\times\Om)$ by \eqref{31}, we thus conclude that $(\alpha_k \Delta v_{\alpha_k})_k$ converges weakly to zero in $L_2((0,T)\times\Om)$. Consequently, as $v$ belongs to $L_\infty((0,T)\times\Om)$, we have
\bqn
\label{48}
\lim_{k\to\infty} \alpha_k\ \int_0^T \int_\Om \Delta v_{\alpha_k}\ v\ \rd x\rd t = 0\,. 
\eqn
We then infer from \eqref{43}, \eqref{47}, and \eqref{48} that
\bqn
\label{49}
v_{\alpha_k} \longrightarrow v \;\;\mbox{ in }\;\; L_2((0,T)\times\Om) \;\;\mbox{ and a.e. in }\;\; (0,T)\times\Om\,,
\eqn
the almost everywhere convergence being obtained after possibly extracting a further subsequence.

We are now in a position to pass to the limit as $\alpha_k\to 0$ in the identity of Lemma~\ref{L3.1} which we first formulate in a different way: if $\varphi\in\mathcal{C}^2([0,T)\times[0,\infty)\times\bar{\Om})$ is compactly supported and satisfies $\partial_\nu\varphi=0$ on $[0,T)\times[0,\infty)\times\partial\Om$, it follows from Lemma~\ref{L3.1} that
\bqn
\label{50}
- \int_0^\infty\int_\Om \varphi(0,a,x) \, u^0_{\alpha_k}(a,x) \, \rd
    a\,\rd x  = \int_0^T \int_0^\infty \int_\Om \partial_t\varphi\ u_{\alpha_k}\ \rd x \rd a \rd t + \sum_{n=1}^4 G_{n,k}(\varphi)
\eqn
with
\bean
G_{1,k}(\varphi) &:= & \int_0^T \int_\Om
\xi(v_{\alpha_k} )\, v_{\alpha_k} \left( \frac{1}{{\alpha_k}}\ \int_0^{\alpha_k}     \varphi \,\rd a \right)\, \rd x\rd t \ ,\\
G_{2,k}(\varphi) &:= & \int_0^T \int_0^\infty\int_\Om \left(\frac{\varphi(t,a+{\alpha_k},x)-\varphi(t,a,x)}{{\alpha_k}}-\varphi(t,a,x)\mu_{\alpha_k}(a)\right)\  u_{\alpha_k} (t,a,x)\, \rd x\rd a\rd t\ ,\\
G_{3,k}(\varphi) &:= & \int_0^T \int_0^\infty \int_\Om \Delta_x\varphi\, D_{\alpha_k}(\Lambda_{\alpha_k})\, u_{\alpha_k}\ \rd x\rd a\rd t\ ,\\
G_{4,k}(\varphi) &:= & \int_0^T \int_\Om \mathbf{J}_{\alpha_k} \cdot  \left( \int_0^\infty u_{\alpha_k}\ \nabla_x \varphi\ \rd a \right) \,\rd x\rd t\ ,\\
\mathbf{J}_{\alpha_k} & := & \nabla_x D_{\alpha_k}(\Lambda_{\alpha_k})\,-\, E(\Lambda_{\alpha_k} , v_{\alpha_k} )\ \nabla_x \Lambda_{\alpha_k} \,.
\eean
First, the boundedness \eqref{31} of $(v_{\alpha_k})_k$, the convergence \eqref{49}, the continuity of $\varphi$, $(h_1)$, and the Lebesgue dominated convergence theorem allow us to pass to the limit in $(G_{1,k}(\varphi))_k$ and conclude that
\bqn
\label{51}
\lim_{k\to\infty} G_{1,k}(\varphi) = \int_0^T \int_\Om
\xi(v(t,x))\, v(t,x)\ \varphi(t,0,x)\, \rd x\rd t\,.
\eqn
In order to handle $(G_{2,k}(\varphi))_k$,  we recall the following consequence of the Dunford-Pettis and Egorov theorems, which is implicitly contained in \cite[p.341]{DPL89} (for a proof see \cite[Lem.A.2]{LauMiArch} for instance).

\begin{lem}\label{leweakae}
Let $U$ be an open bounded subset of $\R^m$, $m\ge 1$, and consider two sequences $(y_k)_k \in L_1(U)$ and $(z_k)_k \in L_\infty(U)$ and functions $y \in L_1(U)$ and $z\in L_\infty(U)$ such that
$$
y_k \rightharpoonup y \;\;\mbox{ in}\;\; L_1(U)\,, \quad |z_k(x)|\le K\;\;\mbox{ and }\;\; \lim_{n\to\infty} z_k(x) = z(x) \;\;\mbox{ a.e. in }\;\; U
$$
for some $K>0$. Then $(y_k z_k)_k$ converges weakly towards $yz$ in $L_1(U)$.
\end{lem}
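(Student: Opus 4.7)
The plan is to combine the Dunford--Pettis characterization of weak $L_1$-compactness with Egorov's theorem in a standard splitting argument. Weak convergence in $L_1(U)$ is tested against $L_\infty(U)$, so the goal reduces to showing that
$$
\int_U y_k\, z_k\, \varphi\, \rd x \longrightarrow \int_U y\, z\, \varphi\, \rd x
$$
for every $\varphi\in L_\infty(U)$. Writing $y_k z_k \varphi = y_k (z_k-z)\varphi + y_k z\varphi$, the second term already converges to $\int_U yz\varphi\,\rd x$ since $z\varphi\in L_\infty(U)$ (note that $|z|\le K$ a.e.\ as the pointwise limit) and $y_k\rightharpoonup y$ in $L_1(U)$. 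So the whole matter reduces to proving that
$$
I_k := \int_U y_k\, (z_k-z)\, \varphi\, \rd x \longrightarrow 0\,.
$$

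To control $I_k$, I would first invoke the Dunford--Pettis theorem: the weakly convergent sequence $(y_k)_k$ is equi-integrable, so given $\varepsilon>0$ there exists $\delta>0$ such that $\int_A |y_k|\,\rd x\le\varepsilon$ for all measurable $A\subset U$ with $|A|<\delta$ and all $k$. Moreover, weakly convergent sequences are norm-bounded, so $M:=\sup_k\|y_k\|_{L_1(U)}<\infty$. Next, since $U$ has finite Lebesgue measure and $z_k\to z$ a.e., Egorov's theorem provides a measurable subset $A_\delta\subset U$ with $|U\setminus A_\delta|<\delta$ on which $z_k\to z$ uniformly.

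I would then split $I_k$ over $A_\delta$ and $U\setminus A_\delta$. On $A_\delta$, uniform convergence gives
$$
\left|\int_{A_\delta} y_k(z_k-z)\varphi\,\rd x\right| \le \|\varphi\|_\infty\, M\, \sup_{A_\delta}|z_k-z| \longrightarrow 0
$$
as $k\to\infty$. On $U\setminus A_\delta$, the bound $|z_k-z|\le 2K$ together with equi-integrability yields
$$
\left|\int_{U\setminus A_\delta} y_k(z_k-z)\varphi\,\rd x\right| \le 2K\,\|\varphi\|_\infty\, \int_{U\setminus A_\delta} |y_k|\,\rd x \le 2K\,\|\varphi\|_\infty\,\varepsilon\,.
$$
Taking $\limsup_{k\to\infty}$ and then letting $\varepsilon\to 0$ gives $I_k\to 0$.

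The argument has no genuine obstacle; the only delicate point is the interplay between the two compactness notions: pointwise control of $(z_k)$ alone would be useless without the equi-integrability of $(y_k)$ to absorb the bad set where $z_k\to z$ is slow, and conversely, weak $L_1$-convergence of $(y_k)$ against the merely a.e.\ convergent $(z_k)$ requires Egorov to upgrade a.e.\ convergence to uniform convergence off a small set. Recognizing this matching is the heart of the proof, and once done, the estimate above is essentially immediate.
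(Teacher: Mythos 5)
Your proof is correct and follows exactly the route the paper indicates: the paper states the lemma as a consequence of the Dunford--Pettis and Egorov theorems and refers to an external reference for the written-out argument, and your splitting of $y_kz_k\varphi$ into $y_k(z_k-z)\varphi + y_kz\varphi$, followed by Egorov on a large set and equi-integrability on the small complement, is precisely that standard argument. No gaps.
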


We now note that 
$$
\lim_{k\to\infty} \left( \frac{\varphi(t,a+{\alpha_k},x)-\varphi(t,a,x)}{{\alpha_k}}-\varphi(t,a,x)\mu_{\alpha_k}(a)\right) = \partial_a \varphi(t,a,x) - \varphi(t,a,x)\ \mu(a)
$$
for a.e. $(t,a,x)\in (0,T)\times(0,\infty)\times\Om$ and is bounded in $(0,T)\times (0,\infty)\times\Om$ by $(h_4)$. Due to this fact and the weak convergence \eqref{39} of $(u_{\alpha_k})_k$ in $L_1$ we may apply Lemma~\ref{leweakae} and deduce that 
\bqn
\label{52}
\lim_{k\to\infty} G_{2,k}(\varphi) = \int_0^T \int_0^\infty\int_\Om
    \big(\partial_a \varphi(t,a,x)-\varphi(t,a,x)\mu(a)\big)\  u(t,a,x)\, \rd x\rd a\rd t\,.
\eqn
Similarly, $\left( \varphi\, D_{\alpha_k}(\Lambda_{\alpha_k} )\right)_k$ is bounded and converges a.e. in $(0,T)\times(0,\infty)\times\Om$ by virtue of \eqref{31} and \eqref{42}, so that the same argument applies to establish that
\bqn
\label{53}
\lim_{k\to\infty} G_{3,k}(\varphi) = \int_0^T \int_0^\infty \int_\Om \Delta_x\varphi\, D(\Lambda)\, u\ \rd x\rd a\rd t\,.
\eqn
Finally, $\mathbf{J}_{\alpha_k}$ also reads 
$$
\mathbf{J}_{\alpha_k} = \frac{D'}{\zeta_1'}(\Lambda_{\alpha_k})\ \nabla_x\zeta_1(\Lambda_{\alpha_k})\,-\, \frac{E(\Lambda_{\alpha_k} , v_{\alpha_k} )}{\zeta_2'(\Lambda_{\alpha_k})} \ \nabla_x\zeta_2(\Lambda_{\alpha_k}) \,,
$$
and we infer from \eqref{32} and \eqref{42} that, after extracting a further subsequence if necessary, we may assume that
$$
\nabla_x\zeta_1(\Lambda_{\alpha_k}) \rightharpoonup \nabla_x\zeta_1(\Lambda) \quad\mbox{ and }\quad \nabla_x\zeta_2(\Lambda_{\alpha_k}) \rightharpoonup \nabla_x\zeta_2(\Lambda) \ \mbox{ in }\ L_2((0,T)\times\Om)\,.
$$
In addition, $(h_5)$ and $(h_6)$ imply the local boundedness of $D'/\zeta_1'$ and $E/\zeta_2'$, so that
$$
\frac{D'}{\zeta_1'}(\Lambda_{\alpha_k}) \longrightarrow \frac{D'}{\zeta_1'}(\Lambda) \;\;\mbox{ and }\;\; \frac{E(\Lambda_{\alpha_k} , v_{\alpha_k} )}{\zeta_2'(\Lambda_{\alpha_k})} \longrightarrow \frac{E(\Lambda , v )}{\zeta_2'(\Lambda)} \;\;\mbox{ in }\;\; L_4((0,T)\times\Om)
$$
by \eqref{31}, \eqref{42}, and \eqref{49}. We then conclude from the above two convergence results that 
$$
\mathbf{J}_{\alpha_k} \rightharpoonup \mathbf{J}:= \frac{D'}{\zeta_1'}(\Lambda)\ \nabla_x\zeta_1(\Lambda)\,-\, \frac{E(\Lambda , v )}{\zeta_2'(\Lambda)} \ \nabla_x\zeta_2(\Lambda) \;\;\mbox{ in }\;\; L_{4/3}((0,T)\times\Om)\,.
$$
Since 
$$
G_{4,k}(\varphi) = \int_0^T \int_\Om \mathbf{J}_{\alpha_k} \cdot  \mcm_{\nabla_x\varphi, \alpha_k} \,\rd x\rd t
$$
and $\mcm_{\nabla_x\varphi, \alpha_k}\longrightarrow \mcm_{\nabla_x\varphi}$ in $L_4((0,T)\times\Om)$ by \eqref{41b}, we finally obtain
\bqn
\label{54}
\lim_{k\to\infty} G_{4,k}(\varphi) = \int_0^T \int_\Om \left( \frac{D'}{\zeta_1'}(\Lambda)\ \nabla_x\zeta_1(\Lambda)\,-\, \frac{E(\Lambda , v )}{\zeta_2'(\Lambda)} \ \nabla_x\zeta_2(\Lambda) \right) \cdot  \left( \int_0^\infty u\ \nabla_x \varphi\, \rd a \right) \,\rd x\rd t\,.
\eqn
Due to \eqref{51}-\eqref{54}, we may pass to the limit as $k\to\infty$ in \eqref{50} and deduce that $u$ solves \eqref{1} in the weak sense stated in Definition~\ref{D}.
\end{proof}

\begin{rem}\label{Miraculix}
It follows from the proof in Section \ref{S4.3} that the strong compactness \eqref{49} of $(v_\alpha)$ and the possibility to pass to the limit in the weak formulation of equation \eqref{1} heavily rely on the strong compactness \eqref{41} of the age averages of $(u_\alpha)$. In turn, this compactness property stems from the positivity of $D$ on $(0,\infty)$. Therefore, the interesting case where swarming is only due to the drift term $\divv_x(uE(\Lambda,v)\nabla_x\Lambda)$ (corresponding to $D\equiv 0$) cannot be handled in a straightforward way. A similar remark applies to the case where $D$ vanishes on a neighborhood of 0 (e.g. as in \cite{Ayati1}).
\end{rem}

\begin{rem}\label{hinkelstein}
It is quite clear from the proof of Theorem~\ref{T} that the regularity assumptions on the data $D$, $b$, $\lambda$, and the initial data $u^0$, $v^0$ are mainly needed to apply the results in \cite{Amann93} for the approximation \eqref{10}-\eqref{16} and thus could be weakened. For instance, it would be sufficient for $v^0$  to be in $L_\infty(\Om)$ instead of $W_p^1(\Om)$. Similarly, one could replace the assumption $u^0 \in L_1\big((0,\infty),W_p^1(\Om);\lambda(a) \rd a \big)$ by $u^0\in L_\infty(\Om;L_1((0,\infty);\lambda(a) \rd a))$. The only modifications to be done in the proof of Theorem~\ref{T} would be the construction of suitable approximations $(D_\alpha)$, $(b_\alpha)$, $(\lambda_\alpha)$, $(u_\alpha^0)$, and $(v_\alpha^0)$ to $D$, $b$, $\lambda$, $u^0$, and $v^0$, which can be done in a classical way.
\end{rem}

\section*{Acknowledgement} We thank the Institut de Math\'ematiques de Toulouse and the Institut f\"ur Angewandte Mathematik of the Leibniz Universit\"at Hannover for hospitality and support during the preparation of this work.

\vspace{1cm}

\end{document}